\numberwithin{equation}{section}
\renewcommand{\subsection}{\@startsection
{subsection}{2}{0mm}{\baselineskip}{-0.25cm}
{\normalfont\normalsize\bf}}
\newtheorem{theorem}{Theorem}[section]
\newtheorem{proposition}[theorem]{Proposition}
\newtheorem{lemma}[theorem]{Lemma}
\newtheorem{corollary}[theorem]{Corollary}
\newtheorem*{theorem*}{Theorem}
\newtheorem{remark}[theorem]{Remark}
\newtheorem{example}[theorem]{Example}
\newtheorem{result}[theorem]{Result}
\newcommand{\Fq}{\mathbb F_q}
\newcommand{\F}{\mathbb F}
\newcommand{\PG}{\mathrm{PG}}
\newcommand{\N}{\mathrm{N}}
\newcommand{\Tr}{\mathrm{Tr}}
\newcommand{\la}{\langle}
\newcommand{\ra}{\rangle}
\newcommand{\C}{\mathcal C}
\newcommand{\intn}{\mathrm{intn}}
\DeclareMathOperator{\Res}{Res}
\begin{document}

\author[D. Bartoli]{Daniele Bartoli}
\address{ Dipartimento di Matematica e Informatica, Universit\`a degli Studi di Perugia} \email{daniele.bartoli@unipg.it}

\author[C. Zanella]{Corrado Zanella}
\address{ Dipartimento di Tecnica e Gestione dei Sistemi Industriali, Universit\`a degli Studi di Padova} \email{corrado.zanella@unipd.it}

\author[F. Zullo]{Ferdinando Zullo}
\address{ Dipartimento di Matematica e Fisica, Universit\`a degli Studi della Campania ``Luigi Vanvitelli''} \email{ferdinando.zullo@unicampania.it}
\thanks{The third author was partially funded by a fellowship from the Department of Management and Engineering (DTG) of the Padua University and by the project ''VALERE: Vanvitelli pEr la RicErca" of the University of Campania ''Luigi Vanvitelli''.
The research of all the authors was supported by the Italian National Group for Algebraic and Geometric Structures and their Applications (GNSAGA - INdAM)}

\title{A new family of maximum scattered linear sets in $\mathrm{PG}(1,q^6)$}

\begin{abstract}
We generalize the example of linear set presented by the last two authors in ``Vertex properties of maximum scattered linear sets of $\PG(1,q^n)$" (2019) to a more general family, proving that such linear sets are maximum scattered when $q$ is odd and, apart from a special case, they are new.
This solves an open problem posed in ``Vertex properties of maximum scattered linear sets of $\PG(1,q^n)$" (2019).
As a consequence of Sheekey's results in ``A new family of linear maximum rank distance codes" (2016), this family yields to new MRD-codes with parameters $(6,6,q;5)$.
\end{abstract}

\maketitle

\section{Introduction}

Let $\Lambda=\PG(V,\F_{q^n})=\PG(1,q^n)$, where $V$ is a vector space of dimension $2$ over $\F_{q^n}$.
If $U$ is a $k$-dimensional $\F_q$-subspace of $V$, then the \emph{$\F_q$-linear set} $L_U$ is defined as
\[L_U=\{\la {\bf u} \ra_{\mathbb{F}_{q^n}} \colon {\bf u}\in U\setminus \{{\bf 0} \}\},\]
and we say that $L_U$ has \emph{rank} $k$.
Two linear sets $L_U$ and $L_W$ of $\PG(1,q^n)$ are said to be \emph{$\mathrm{P\Gamma L}$-equivalent} if there is an element $\phi$ in $\mathrm{P\Gamma L}(2,q^n)$ such that $L_U^{\phi} = L_W$.
It may happen that two $\F_q$--linear sets $L_U$ and $L_W$ of $\PG(1,q^n)$ are $\mathrm{P\Gamma L}$-equivalent even if the $\F_q$-vector subspaces $U$ and $W$ are not in the same orbit of $\Gamma \mathrm{L}(2,q^n)$ (see \cite{CSZ2015,CMP} for further details).
In this paper we focus on {\it maximum scattered} $\F_q$-linear sets of $\PG(1,q^n)$, that is,
$\F_q$-linear sets of rank $n$ in $\PG(1,q^n)$ of size $(q^n-1)/(q-1)$.

If $\la (0,1) \ra_{\F_{q^n}}$ is not contained in the linear set $L_U$ of rank $n$ of $\PG(1,q^n)$ (which we can always assume after a suitable projectivity), then $U=U_f:=\{(x,f(x))\colon x\in \F_{q^n}\}$ for some linearized polynomial (or \textit{$q$-polynomial}) $f(x)=\sum_{i=0}^{n-1}a_ix^{q^i}\in \F_{q^n}[x]$. In this case we will denote the associated linear set by $L_f$.
If $L_f$ is scattered, then $f(x)$ is called a \emph{scattered} $q$-polynomial; see \cite{Sh}.

The first examples of scattered linear sets were found by Blokhuis and Lavrauw in \cite{BL2000} and by Lunardon and Polverino in \cite{LP2001} (recently generalized by Sheekey in \cite{Sh}).
Apart from these, very few examples are known, see Section \ref{EquivIssue}.

In \cite[Sect.\ 5]{Sh}, Sheekey established a connection between maximum scattered linear sets of $\PG(1,q^n)$ and MRD-codes, which are interesting because of their applications to random linear network coding and cryptography.
We point out his construction in the last section.
By the results of \cite{BartoliMontanucci} and \cite{BartoliZhou}, it seems that examples of maximum scattered linear sets are rare.

In this paper we will prove that any
\begin{equation}\label{ourpol}
f_h (x) = h^{q-1}x^q-h^{q^2-1}x^{q^2}+x^{q^4}+x^{q^5},\quad h\in\F_{q^6},\quad h^{q^3+1}=-1,\quad q\ \mbox{odd}
\end{equation}
is a scattered $q$-polynomial.
This will be done by considering two cases:\\
\underline{Case 1:} $h\in\Fq$, that is, $f_h(x)=x^q-x^{q^2}+x^{q^4}+x^{q^5}$;
the condition $h^{q^3+1}=-1$ implies $q\equiv 1 \pmod 4$.\\
\underline{Case 2:} $h\not\in\Fq$.
In this case $h\neq \pm\sqrt{-1}$, otherwise $h \in \F_{q^2}$ and then we have $h^{q+1}=1$, a contradiction to $h^{q^3+1}=-1$.

Note that in Case 1, this example coincides with the one introduced in \cite{ZZ}, where it has been proved that $f_h$ is scattered for $q\equiv 1 \pmod{4}$ and $q\leq 29$.
In Corollary \ref{EquivLS} we will prove that the linear set $\mathcal L_h$ associated with $f_h(x)$ is new, apart from the case of $q$ a power of 5 and $h\in\F_q$. This solves an open problem posed in \cite{ZZ}.

Finally, in Section \ref{Sec:MRD} we prove that the $\F_q$-linear MRD-codes with parameters $(6,6,q;5)$ arising from linear sets $\mathcal L_h$ are not equivalent to any previously known MRD-code, apart from the case $h \in \F_q$ and $q$ a power of $5$; see Theorem \ref{thm:newMRD}.

\section{$\mathcal{L}_h$ is scattered}

A $q$-\emph{polynomial} (or \emph{linearized polynomial}) over $\F_{q^n}$ is a polynomial of the form
\[f(x)=\sum_{i=0}^t a_i x^{q^i},\]
where $a_i\in \F_{q^n}$ and $t$ is a positive integer.
We will work with linearized polynomials of degree less than or equal to $q^{n-1}$. For such a kind of polynomial, the \emph{Dickson matrix}\footnote{This is sometimes called \emph{autocirculant matrix}.} $M(f)$ is defined as
\[M(f):=
\begin{pmatrix}
a_0 & a_1 & \ldots & a_{n-1} \\
a_{n-1}^{q} & a_0^{q} & \ldots & a_{n-2}^{q} \\
\vdots & \vdots & \vdots & \vdots \\
a_1^{q^{n-1}} & a_2^{q^{n-1}} & \ldots & a_0^{q^{n-1}}
\end{pmatrix} \in \F_{q^n}^{n\times n}
,\]
where $a_i=0$ for $i>t$.

Recently, different results regarding the number of roots of linearized polynomials have been presented, see \cite{qres,teoremone,McGuireSheekey,PZ,Zanella}.
In order to prove that a certain polynomial is scattered, we make use of the following result; see \cite[Corollary 3.5]{qres}.

\begin{theorem}\label{Th:Sottobicchiere}
Consider the $q$-polynomial $f(x)=\sum_{i=0}^{n-1}a_i x^{q^i}$ over $\mathbb{F}_{q^n}$ and, with $m$ as a variable, consider the matrix
\[M(m):=
\begin{pmatrix}
m & a_1 & \ldots & a_{n-1} \\
a_{n-1}^{q} & m^{q} & \ldots & a_{n-2}^{q} \\
\vdots & \vdots & \vdots & \vdots \\
a_1^{q^{n-1}} & a_2^{q^{n-1}} & \ldots & m^{q^{n-1}}
\end{pmatrix}
.\]
The determinant of the $(n-i)\times(n-i)$ matrix obtained by $M(m)$ after removing the first $i$ columns and the last $i$ rows of $M(m)$ is a polynomial $M_{n-i}(m)\in \F_{q^n}[m]$.
Then the polynomial $f(x)$ is scattered if and only if $M_0(m)$ and $M_1(m)$ have no common roots.
\end{theorem}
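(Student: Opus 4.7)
The plan is to recast the scattered property as a rank condition on the Dickson matrix $M(m)$ and then exploit that matrix's Frobenius-cyclic structure to reduce the vanishing of all $(n-1)\times(n-1)$ minors to the vanishing of just $M_0$ and $M_1$.

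First I would invoke the standard fact that for a $q$-polynomial $g(x)$ of $q$-degree less than $n$, the $\F_q$-linear map $x\mapsto g(x)$ on $\F_{q^n}$ has kernel of $\F_q$-dimension equal to $n-\mathrm{rank}\,M(g)$. Applied to $g_m(x):=f(x)-mx$, whose Dickson matrix is precisely the $M(m)$ in the statement, this turns scatteredness into the condition that $\mathrm{rank}\,M(m)\ge n-1$ for every $m\in\F_{q^n}$; equivalently, there is no $m$ at which every $(n-1)\times(n-1)$ minor of $M(m)$ vanishes. One direction of the theorem is then immediate: if $f$ is not scattered, some $m_0$ realizes $\mathrm{rank}\,M(m_0)\le n-2$, so every $(n-1)\times(n-1)$ minor vanishes at $m_0$, in particular $M_1(m_0)=0$, and $M_0(m_0)=0$ since $M(m_0)$ is already singular.

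The substantive direction is the converse: I need to show that if $M_0(m_0)=M_1(m_0)=0$, then $\mathrm{rank}\,M(m_0)\le n-2$. The key feature to exploit is that $M(m)$ is Frobenius-cyclic, meaning $M(m)_{i+1,j+1}=(M(m)_{i,j})^q$ with indices taken modulo $n$. Consequently, if $v=(v_0,\ldots,v_{n-1})^T$ lies in $\ker M(m_0)$ then so does its twisted cyclic shift $(v_{n-1}^{q},v_0^{q},\ldots,v_{n-2}^{q})^T$. At the level of the adjugate, this symmetry forces the $(n-1)\times(n-1)$ cofactors of $M(m_0)$ to organize into Frobenius orbits of a small number of ``anchor'' minors, and the definition of $M_1$ (striking the first column and the last row) is tuned precisely so that $M_1$, together with $M_0$, represents a generating collection. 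The goal is then to conclude that $\mathrm{adj}\,M(m_0)=0$, which forces $\mathrm{rank}\,M(m_0)\le n-2$, as required.

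The main obstacle I expect is the combinatorial bookkeeping in that last step: describing the Frobenius-cyclic orbits of the full set of $(n-1)\times(n-1)$ cofactors, verifying that $M_1$ lies in a generating orbit, and confirming that the ``first $i$ columns and last $i$ rows'' normalization in the definition of $M_{n-i}$ is the one that makes the orbit-propagation transparent. Once that symmetry lemma is in place, the vanishing of $M_0(m_0)$ and $M_1(m_0)$ simultaneously kills the entire orbit and hence every $(n-1)\times(n-1)$ minor of $M(m_0)$, completing the proof.
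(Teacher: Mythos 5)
First, note that the paper does not prove this statement at all: it is imported verbatim from \cite[Corollary 3.5]{qres}, so there is no in-paper argument to compare against. (Also, the indices ``$M_0$ and $M_1$'' in the statement are a slip for $M_n$ and $M_{n-1}$, as the later use of $M_6$ and $M_5$ confirms; you have read them the intended way.) Your reduction is the right one: $\dim_{\F_q}\ker g=n-\mathrm{rank}\,M(g)$ applied to $g_m(x)=f(x)-mx$ turns scatteredness into $\mathrm{rank}\,M(m)\ge n-1$ for all $m$, and the ``only if'' direction is indeed immediate. The issue is the mechanism you propose for the converse. The Frobenius-cyclic symmetry $\mathrm{adj}\,M(m_0)_{i+1,j+1}=\bigl(\mathrm{adj}\,M(m_0)_{i,j}\bigr)^q$ (which holds because $M^{(q)}=S^{-1}MS$ for the cyclic shift $S$, hence $\mathrm{adj}(M)^{(q)}=S^{-1}\mathrm{adj}(M)S$) partitions the $n^2$ cofactors into exactly $n$ orbits, one for each generalized diagonal $j-i\equiv k\pmod n$. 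The cofactor corresponding to $M_{n-1}$ lies on the single diagonal $k=n-1$, so orbit propagation alone can never reach the other $n-1$ diagonals: there is no choice of ``generating orbit'' that makes your last step go through, and this is precisely where the plan as written stalls.

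The missing ingredient is the rank-one structure of the adjugate. If $\det M(m_0)=0$ and $\mathrm{rank}\,M(m_0)=n-1$, then $\mathrm{adj}\,M(m_0)=uw^{T}\ne 0$ with $u$ spanning $\ker M(m_0)$ and $w$ spanning the left kernel. The vanishing of the single entry $\mathrm{adj}\,M(m_0)_{0,n-1}=u_0w_{n-1}$ then forces an entire row or an entire column of $\mathrm{adj}\,M(m_0)$ to vanish, and the autocirculant property propagates a zero row to all rows (and a zero column to all columns), giving $\mathrm{adj}\,M(m_0)=0$, a contradiction. Equivalently, in the language of your kernel observation: the one-dimensional kernel is invariant under the twisted shift $v\mapsto(v_{n-1}^q,v_0^q,\dots,v_{n-2}^q)$, so $u_k=\lambda^{-1}u_{k-1}^q$ for some $\lambda\ne0$, whence a single zero coordinate of $u$ would force $u=0$; thus $u$ and $w$ have no zero coordinates and every cofactor is nonzero. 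With this lemma inserted, your argument closes and is, as far as I can tell, a legitimate self-contained proof; it is a more linear-algebraic route than the $q$-subresultant machinery developed in \cite{qres}, at the cost of proving the kernel--rank identity for Dickson matrices as a black box.
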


\subsection{Case 1}\label{sez2}
Let
$$f (x) = x^q - x^{q^2}+ x^{q^4}+ x^{q^5}\in\F_{q^6}[x].$$
By Theorem \ref{Th:Sottobicchiere}, $f(x)$
is scattered if and only if for each $m\in \mathbb{F}_{q^6}$ the determinants
of the following two matrices do not vanish at the same time

\begin{equation}
M_5(m)=\left(
\begin{array}{ccccc}
1&-1&0&1&1\\
m^q&1&-1&0&1\\
1&m^{q^2}&1&-1&0\\
1&1&m^{q^3}&1&-1\\
0&1&1&m^{q^4}&1\\
\end{array}\right),\qquad
M_6(m)=\left(
\begin{array}{cccccc}
m&1&-1&0&1&1\\
1&m^q&1&-1&0&1\\
1&1&m^{q^2}&1&-1&0\\
0&1&1&m^{q^3}&1&-1\\
-1&0&1&1&m^{q^4}&1\\
1&-1&0&1&1&m^{q^5}\\
\end{array}\right).
\end{equation}

\begin{theorem}\label{caso1}
The polynomial $f(x)$ is scattered if and only if $q\equiv 1 \pmod 4$.
\end{theorem}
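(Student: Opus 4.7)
I apply Theorem~\ref{Th:Sottobicchiere}: it suffices to show that $\det M_5(m)$ and $\det M_6(m)$ have a common zero in $\F_{q^6}$ exactly when $q\equiv 3\pmod 4$. All off-diagonal entries of $M_5$ and $M_6$ lie in the prime field $\F_q$, so, writing $y_i:=m^{q^i}$, the two determinants become polynomials $F_5(y_1,y_2,y_3,y_4)$ and $F_6(y_0,y_1,\dots,y_5)$ with coefficients in $\F_q$. Because these coefficients are fixed by Frobenius, the $q^j$-th power of $F_5=0$ and of $F_6=0$ yields the same polynomial relations after a cyclic shift of indices mod $6$. Thus a hypothetical common root of $\det M_5(m)$ and $\det M_6(m)$ must simultaneously satisfy the system of $12$ polynomial equations obtained from $F_5,F_6$ and their five cyclic shifts, together with the Frobenius constraints $y_{i+1}=y_i^q$.

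\textbf{Execution.} First I expand both determinants explicitly; it is convenient to organize $F_6$ by the number of diagonal factors $y_i$ appearing in each term, giving a sum of homogeneous parts of degrees $0,1,\dots,6$ in the $y_i$'s (and similarly for $F_5$). Second, I exploit the cyclic $\F_q$-symmetry: taking $\F_q$-linear combinations of the six shifts of $F_6=0$ (the ``trace'' $\sum_j F_6^{q^j}$, alternating sums, and quadratic twists) produces consequences that depend only on cyclic-invariant functions of $(y_0,\dots,y_5)$, i.e.\ on the traces and norms of $m$ over the intermediate subfields $\F_{q^2}$ and $\F_{q^3}$. The same reduction is performed on $F_5$. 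Finally I combine the reduced invariant constraints through iterated resultants (or a small Gröbner basis computation over $\F_q$) to distill a single univariate polynomial $R(m)\in\F_q[m]$ that every common root must satisfy.

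\textbf{Outcome and parity.} I expect $R(m)$ to factor over $\F_q$ into pieces whose $\F_{q^6}$-roots satisfy simple algebraic conditions such as $m^2=-1$, $m^{q+1}=-1$ or $m^{q^3+1}=-1$. Solvability of any such relation in the appropriate subfield of $\F_{q^6}$ is controlled by whether $-1$ is a square in the relevant finite field, which in turn depends only on $q\bmod 4$. If these simplifications pin the common-root condition down to $-1$ being a non-square in $\F_q$, i.e.\ $q\equiv 3\pmod 4$, then for $q\equiv 1\pmod 4$ no element of $\F_{q^6}$ meets the constraint and $f$ is scattered. In the converse direction I produce an explicit witness for $q\equiv 3\pmod 4$, most plausibly an element $i\in\F_{q^2}\subset\F_{q^6}$ with $i^2=-1$ (or a close relative of it), and verify by direct substitution that both $\det M_5$ and $\det M_6$ vanish at it, showing $f$ is not scattered.

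\textbf{Main obstacle.} The principal difficulty is the size of the symbolic elimination: $F_6$ has many monomials and a naive six-variable resultant is unwieldy. The structural remedy is to exploit the cyclic action of $\mathrm{Gal}(\F_{q^6}/\F_q)$ from the start, rewriting all relations in the language of norms and traces of $m$ over the subfields so as to reduce effective dimension. Identifying, among many equivalent consequences of the system, the cleanest identity that exposes the $q\pmod 4$ dichotomy is the delicate part; I expect to require computer algebra to organize the reduction, but the final extracted condition should be short enough to state and to verify by hand.
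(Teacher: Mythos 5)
Your overall strategy coincides with the paper's: apply Theorem~\ref{Th:Sottobicchiere}, replace $m,m^q,\dots,m^{q^5}$ by independent variables, adjoin the six Galois shifts of each determinant to get a system of twelve equations, and eliminate. However, as written your text is a plan rather than a proof: every decisive step is prefaced by ``I expect.'' The entire mathematical content of this theorem lies precisely in the elimination you defer (the paper computes explicit resultants $Q_1,Q_2,Q_3$ of shifted copies of $\det M_5$ and obtains the factorization $8(YZ-4)(Y^2+4)(X-Z)(XZ+4)(XY-4)$, followed by a four-case analysis) and in the explicit verification of the witness. Your proposed alternative reduction to ``traces and norms of $m$ over intermediate subfields'' is not justified: the individual equations are not cyclically invariant (only the system as a whole is), and nothing guarantees the consequences you need are expressible in invariants. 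Without carrying out either your reduction or the paper's resultants, there is no proof here.

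There are also two concrete errors. First, you claim it suffices to show a common zero exists ``exactly when $q\equiv 3\pmod 4$''; this omits even $q$, where the polynomial is also not scattered (the paper notes $M_6(0)$ has rank two when $q$ is even, so $m=0$ is a common root). Second, your parity logic at the end is wrong: the condition the elimination actually produces is $m^2=-4$ (your witness should be $2i$ rather than $i$, and you never verify that both determinants vanish there), and $m^2=-4$ is solvable in $\F_{q^6}$, indeed in $\F_{q^2}$, for \emph{every} odd $q$. The dichotomy is not solvability but location: for $q\equiv 3\pmod 4$ the solution lies in $\F_{q^2}\setminus\F_q$, satisfies $m^{q^2}=m$, $m^q=-m$, and one checks directly that $\det M_5=(m^2+4)^2$ and $\det M_6=-(m^2+4)^3$ both vanish; for $q\equiv 1\pmod 4$ the solution lies in $\F_q$, forcing $X=Y=\cdots=W$, and then a further substitution into $P_1$ (whose resultant with $X^2+4$ is $2^{27}\neq 0$) rules it out. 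This last step is essential and is absent from your argument.
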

\begin{proof}
If $q$ is even, then for $m=0$ the matrix $M_6(0)$ has rank two and $f(x)$ is not scattered.

Suppose now $q\equiv 3 \pmod 4$. Then let $\overline{m} \in \mathbb{F}_{q^2}\setminus \mathbb{F}_q$ such that $\overline{m}^2=-4$. So   $\overline{m}=\overline{m}^{q^2}=\overline{m}^{q^4}=-\overline{m}^q=-\overline{m}^{q^3}=-\overline{m}^{q^5}$ and, by direct checking,
$$ \det(M_5(\overline{m}))=(\overline{m}^2+4)^2= 0, \quad\det(M_6(\overline{m}))=-(\overline{m}^2+4)^3= 0$$
and  $f(x)$ is not scattered.

Assume $q\equiv 1 \pmod 4$ and suppose that $f(x)$ is not scattered.
Then there exists $m_0\in \mathbb{F}_{q^6}$ such that
\begin{equation}\label{eq:eqs}
(\det (M_5(m_0)))^{q^s}=0,\ (\det (M_6(m_0)))^ {q^t}=0,\ s,t=0,1,2,3,4,5.
\end{equation}

Consider
\begin{equation}
P_1=\det\left(
\begin{array}{ccccc}
1&-1&0&1&1\\
Y&1&-1&0&1\\
1&Z&1&-1&0\\
1&1&U&1&-1\\
0&1&1&V&1\\
\end{array}\right),\qquad
P_2=\det\left(
\begin{array}{cccccc}
X&1&-1&0&1&1\\
1&Y&1&-1&0&1\\
1&1&Z&1&-1&0\\
0&1&1&U&1&-1\\
-1&0&1&1&V&1\\
1&-1&0&1&1&W\\
\end{array}\right).
\end{equation}

Therefore,
\begin{equation}\label{newroot}
X=m_0,\ Y=m_0^q,\ \ldots,\ W=m_0^{q^5}
\end{equation}
is a root of $P_1=:P_1^{(0)}$, $P_2=:P_2^{(0)}$
and of the polynomials inductively
defined by
\[
P_i^{(j)}(X,Y,Z,U,V,W)=P_i^{(j-1)}(Y,Z,U,V,W,X),\quad j=1,2,3,4,5,\quad i=1,2,
\]
which arise from Equation \ref{eq:eqs}. These polynomials satisfy
$$\left(P_i^{(j-1)}(m_0,m_0^q,m_0^{q^2},m_0^{q^3},m_0^{q^4},m^{q^5})\right)^q=P_i^{(j)}(m_0,m_0^q,m_0^{q^2},m_0^{q^3},m_0^{q^4},m^{q^5}).$$
One obtains a set $S$ of twelve equations in $X,Y,Z,U,V,W$
having a nonempty zero set.
The following arguments are based on the fact that taking the resultant $R$ of two
polynomials in $S$ with respect to any variable, the equations
$S\cup\{R\}$ admit the same solutions.

We have
\begin{equation}\label{P1}
P_1=YZUV - YZU - 2YZ + 2YU + 4Y - ZUV + 2ZV - 2UV + 4V + 16=0.
\end{equation}
Consider the following resultants:

\begin{eqnarray*}
Q_1&:=&
\Res_V(P_1^{(3)},P_1)=2(XY^2ZU - XY^2ZW + X Y^2 U W + 2 X Y^2 W - 2 X Y Z U  \\
           &&+ 2 X Y Z W- 2 X Y U W + 8 X Y W + 8 X Y - 8 X W + 16 X - Y^2 Z U W - 2 Y^2 Z U \\
            &&+2 Y Z U W - 8 Y Z U - 8 Y Z + 8 Y U - 8 Y W + 8 Z U - 16 Z + 16 U -
            16 W),\\
Q_2&:=&
\Res_V(P_1^{(4)},P_1)=XYZW - XYZ - XYW + 2XZ  \\
&&- 2XW- 2YZ + 2YW + 4Z + 4W + 16,\\
Q_3&:=&
\Res_V(P_1^{(5)},P_1)=XYZU - XYZ - 2XY + 2XZ\\
&& + 4X - YZU + 2YU - 2ZU + 4U + 16.
\end{eqnarray*}
They all must be zero, as well as
\begin{equation}
\Res_W(\Res_U(Q_1,Q_3),Q_2)=8(YZ - 4)(Y^2 + 4)(X - Z)(XZ + 4)(XY - 4).
\end{equation}
We distinguish a number of cases.
\begin{enumerate}
\item Suppose that $Y^2=-4$. Since $q\equiv 1 \pmod 4$, $X=Y=Z=U=V=W$. So
$$P_1=X^4 - 2X^3 + 8X + 16$$
and the resultant between $X^2+4$ and $P_1$ with respect to $X$ is $2^{27}\neq 0$ and then
(\ref{newroot}) is not a root of $P_1$, a contradiction. 
\item Condition $YZ=4$ is clearly equivalent to $XY=4$. This means that $Y=U=W=4/X$, $Z=V=X$. Therefore, by \eqref{P1}
we get $X^2+4=0$ and we proceed as above.
\item Case $XZ=-4$. In this case $Z=-4/X$, $U=-4/Y$, $V=-4/Z=X$, $W=Y$, $X=Z$ and therefore $X^2=-4$ and we can proceed as above.
\item Condition $X=Z$ implies $X\in \mathbb{F}_{q^2}$ and so $X=Z=V$ and $Y=U=W$.
By substituting in $P_1$ and $P_2$,
\begin{eqnarray*}
X^3Y^3 + 3X^3Y - 6X^2Y^2 - 12X^2 + 3XY^3 + 24XY - 12Y^2 - 64&=0,\\
X^2Y^2 - X^2Y + 2X^2 - XY^2 - 4XY + 4X + 2Y^2 + 4Y + 16&=0.
\end{eqnarray*}
Eliminating $Y$ from these  two equations one gets
$$8(X^2 + 4)^6=0,$$
and so  $X^2+4=0$. We proceed as in the previous cases.
\end{enumerate}
This proves that such $m_0\in \mathbb{F}_{q^6}$ does not exist and the assertion follows.
\end{proof}

\subsection{Case 2}

We apply the same methods as in Section \ref{sez2}.
In the following preparatory lemmas (and in the rest of the paper) $q$ is a power of an arbitrary prime $p$.
\begin{lemma}\label{Lemma1}
Let $h \in \mathbb{F}_{q^6}$ be such that $h^{q^3+1}=-1$, $h^4 \neq 1$.
Then
\begin{enumerate}
\item $h^q\neq -h$;
\item $h^{q^2+1}\neq 1$;
\item $h^{q^2+1}\neq \pm h^q$, if $q$ is odd;
\item $h^{4q^2+4}+14h^{2q^2+2q+2}+h^{4q}=0$ implies $p=2$ and $h^{q^2-q+1}=1$ or $q=3^{2s}$, $s\in\mathbb N^*$, $h^{q^2-q+1}=\pm \sqrt{-1}$.
\end{enumerate}
\end{lemma}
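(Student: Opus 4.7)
The plan is to introduce $t := h^{q^2-q+1}$, reduce the given trinomial hypothesis to a quartic in $t$, extract from $h^{q^3+1} = -1$ a constraint that confines $t$ to a small subfield, and then split the analysis by the characteristic $p$ of $\F_q$. Dividing the equation by the nonzero $h^{4q}$ and noting $(4q^2+4) - 4q = 4(q^2-q+1)$ together with $(2q^2+2q+2) - 4q = 2(q^2-q+1)$ yields
\[
t^4 + 14\, t^2 + 1 \;=\; 0.
\]
From the factorization $q^3+1 = (q+1)(q^2-q+1)$ one computes $t^{q+1} = h^{q^3+1} = -1$, whence $t^{q^2} = (t^q)^q = (-t^{-1})^q = t$, so $t \in \F_{q^2}$.

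In characteristic $2$ the polynomial collapses to $t^4 + 1 = (t+1)^4$, forcing $t = 1$, the first alternative of the conclusion. In characteristic $3$, $14 \equiv -1$ reduces it to $(t^2+1)^2$, so $t = \pm\sqrt{-1}$ has order $4$ in $\F_{q^2}^*$; the constraint $t^{q+1} = -1$ is then equivalent to $q + 1 \equiv 2 \pmod 4$, i.e.\ $q \equiv 1 \pmod 4$, and for $q = 3^k$ (using $3 \equiv -1 \pmod 4$) this holds iff $k$ is even, yielding $q = 3^{2s}$ with $s \in \mathbb{N}^*$, the second alternative.

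For $p \geq 5$ the aim is to derive a contradiction. Setting $u := t^2$ gives $u^2 + 14u + 1 = 0$ and $u^{q+1} = 1$; since the discriminant $192$ is nonzero in $\F_p$, $u \in \F_{p^2}$. When $u \in \F_p$, the relation $u^{q+1} = u^2 = 1$ forces $u = \pm 1$, and substitution produces $p \mid 16$ or $p \mid 12$, both impossible for $p \geq 5$; the same reduction handles $u \notin \F_p$ with $q = p^k$ and $k$ even, since there $u^q = u$. The \emph{main obstacle} is the remaining subcase $u \in \F_{p^2} \setminus \F_p$ with $k$ odd, where $u^q$ is the Galois conjugate of $u$ and hence $u^{q+1} = u \bar u = 1$ holds automatically (being the constant term of the minimal polynomial $y^2 + 14 y + 1$); closing this subcase requires exploiting the precise representation $t = h^{q^2-q+1}$ together with the hypotheses $h^{q^3+1} = -1$ and $h^4 \neq 1$ to force the final arithmetic incompatibility, and this is where the delicate work of the proof lies.
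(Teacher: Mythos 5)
Your reduction to $t:=h^{q^2-q+1}$, the quartic $t^4+14t^2+1=0$, the relation $t^{q+1}=-1$ and the membership $t\in\F_{q^2}$ is exactly the paper's starting point, and your treatment of $p=2$ and $p=3$ is complete and correct (slightly more explicit than the paper's, which merely asserts the $p=2$ case and obtains $q=3^{2s}$ from the requirement $\sqrt{-1}\in\F_q$). The genuine gap is the one you name yourself: for $p\ge 5$ you replace the exact condition $t^{q+1}=-1$ by its square $u^{q+1}=1$ with $u=t^2$, and the discarded sign is precisely the information the paper uses. The paper keeps the sign: writing $t=\pm\sqrt z$ with $z=-7\pm4\sqrt3$ a root of $Z^2+14Z+1$, it evaluates $t^{q+1}$ as $z$ when $\sqrt z\in\F_q$ and as $-z$ when $\sqrt z\notin\F_q$, so that $t^{q+1}=-1$ forces $z=\mp1$ and hence $p\in\{2,3\}$. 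The two arguments therefore diverge exactly at the point where you stop, and as written your proposal does not prove the statement.

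You should be aware, however, that the subcase you could not close is not rescued by ``exploiting the precise representation $t=h^{q^2-q+1}$'': the identity $(\sqrt z)^{q+1}=-z$ in the paper's second bullet presupposes $(\sqrt z)^q=-\sqrt z$, i.e.\ $z\in\F_q$, and in your problematic subcase ($z\in\F_{p^2}\setminus\F_p$ with $[\F_q:\F_p]$ odd) one has $z\notin\F_q$, so the paper's computation does not apply there either. The subcase is nonempty: for $q=p=5$ take $w\in\F_{25}$ with $w^2=3$ and $t=4+3w$; then $t^2=3+4w$ is a root of $Z^2+14Z+1$, $t^3=3$, $t^{q+1}=t^6=-1$, and $t$ has order $12$, which divides $(5^6-1)/21=744$, so there exists $h\in\F_{5^6}$ with $h^{21}=t$. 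Such an $h$ satisfies $h^{q^3+1}=t^6=-1$, $h^4\ne 1$ (else $t=h$ would have order dividing $4$), and $h^{4q^2+4}+14h^{2q^2+2q+2}+h^{4q}=h^{4q}(t^4+14t^2+1)=0$, while $p\ne 2$ and $q$ is not an even power of $3$. So the obstacle you hit reflects a defect in statement (4) and in that step of the paper's proof, not merely in your argument; a corrected statement would need an additional alternative covering $t\in\F_{p^2}$, $N_{\F_{p^2}/\F_p}(t)=-1$, and one would then have to track how that alternative propagates into Lemma \ref{Lemma2} and Theorem \ref{Th:GeneralCase}.
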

\proof
The first three are easy computations. Consider now
$$h^{4q^2+4}+14h^{2q^2+2q+2}+h^{4q}=0.$$
For $p=2$ the equation above implies $h^{q^2-q+1}=1$.

Assume now $p\neq2$.
Since $h\neq 0$, it is equivalent to
$$(h^{q^2-q+1})^4+14(h^{q^2-q+1})^2+1=0,$$
that is $(h^{q^2-q+1})^2=-7\pm 4\sqrt{3}=(\sqrt{-3}\pm2\sqrt{-1})^2$. Let $z=-7\pm 4\sqrt{3}$. Note that  $h^{q^2-q+1}=\pm \sqrt{z}$ belongs to $\mathbb{F}_{q^2}$. We distinguish two cases.
\begin{itemize}
\item $\sqrt{z}\in \mathbb{F}_q$. Then
$$-1=h^{q^3+1}=(h^{q^2-q+1})^{q+1}=\left(\pm \sqrt{z}\right)^{q+1}=z=-7\pm4\sqrt{3},$$
a contradiction if $p\neq 3$.
Also, $z=-1$, $q$ is an even power of $3$, and $h^{q^2-q+1}=\pm \sqrt{-1}$.
\item $\sqrt{z}\notin \mathbb{F}_q$. Then
$$-1=h^{q^3+1}=(h^{q^2-q+1})^{q+1}=\left(\pm \sqrt{z}\right)^{q+1}=-z=7\mp4\sqrt{3},$$
a contradiction if $p\neq2$.
\end{itemize}

\endproof

\begin{lemma}\label{Lemma2}
Let $h \in \mathbb{F}_{q^6}$ be such that $h^{q^3+1}=-1$, $h^4\neq 1$.
If a root $\sigma$ of the polynomial
$$h^{q+1}T^{q+1} + (h^{q^2+q+2} + h^{2q^2+2})T^q + (h^{2q^2+2} - h^{q^2+1})T+ h^{q^2+2q+1} + h^{2q^2+q+1}
 - h^{2q} - h^{q^2+q}\in \mathbb{F}_{q^6}[T]$$
belongs to $\mathbb{F}_{q^6}$, then one of the following cases occurs:
\begin{itemize}
\item $p=2$, $h^{q^2-q+1}=1$; or
\item $q=3^{2s}$, $s>0$, $h^{q^2-q+1}=\pm \sqrt{-1}$; or
\item $\sigma=\pm(h^{q^2}+h^q)$; or
\item $h \in \mathbb{F}_q$.
\end{itemize}
\end{lemma}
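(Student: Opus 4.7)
The equation $P(\sigma)=0$ can be rewritten as $\sigma^q(a\sigma+b) = -(c\sigma+d)$, giving
\[
\sigma^q = \varphi(\sigma) := -\frac{c\sigma+d}{a\sigma+b}
\]
as a Möbius relation, provided $a\sigma+b\ne 0$. The degenerate situation $a\sigma+b=0$ forces $c\sigma+d=0$ as well, and a short computation shows $ad=bc$ is equivalent to $\gamma^2=1$ for $\gamma:=h^{q^2-q+1}$; combined with $\gamma^{q+1}=-1$ this is incompatible with the hypotheses unless one is already in one of the listed exceptional cases. Setting $\alpha = h^q+h^{q^2}$ and $\beta=h^{q^2+1}$, the coefficients factor neatly as
\[
a=h^{q+1},\quad b=h^{q^2+2}\alpha,\quad c=\beta(\beta-1),\quad d=h^q(\beta-1)\alpha,
\]
with the useful identities $\alpha^q=h^{-1}(\beta-1)$ and $\gamma^{q+1}=-1$.

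Since $\sigma\in\mathbb{F}_{q^6}$, iterating the Möbius relation six times, with $\varphi^{(q^i)}$ denoting $\varphi$ with coefficients Frobenius-twisted by $q^i$, produces the fixed-point condition $\bigl(\varphi^{(q^5)}\!\circ\!\varphi^{(q^4)}\!\circ\!\cdots\!\circ\!\varphi\bigr)(\sigma)=\sigma^{q^6}=\sigma$. The crucial simplification is that $h^{q^3+1}=-1$ gives $h^{q^3}=-h^{-1}$, $h^{q^4}=-h^{-q}$, $h^{q^5}=-h^{-q^2}$, so the coefficients of $\varphi^{(q^3)},\varphi^{(q^4)},\varphi^{(q^5)}$ all lie in $\mathbb{F}_q(h,h^q,h^{q^2})$. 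Therefore the composite map is represented by a matrix in $\mathrm{GL}_2(\mathbb{F}_q(h,h^q,h^{q^2}))$, and its fixed-point equation is a quadratic in $\sigma$ with coefficients polynomial in $h,h^q,h^{q^2}$.

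Factoring the resulting quadratic, after clearing denominators, is the heart of the argument. In the generic situation the quadratic should be proportional to $\sigma^2-\alpha^2$, yielding the case $\sigma=\pm(h^{q^2}+h^q)$. Degenerations — the leading coefficient of the quadratic vanishing, the constant term vanishing, or the quadratic collapsing to zero (which would mean the composite Möbius map is the identity and fixes every $\sigma$) — each produce polynomial identities in $h$ alone. After reduction modulo $h^{q^3+1}+1$, these should reduce either to $h\in\mathbb{F}_q$ or to $h^{4q^2+4}+14h^{2q^2+2q+2}+h^{4q}=0$; by Lemma~\ref{Lemma1}(4) the latter gives precisely the two cases $p=2,\ h^{q^2-q+1}=1$ and $q=3^{2s},\ h^{q^2-q+1}=\pm\sqrt{-1}$, completing the case analysis.

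The main obstacle is the explicit elimination. Composing six Möbius transformations with coefficients expressed in $h,h^q,h^{q^2}$ yields a matrix whose entries are long polynomials, and extracting the fixed-point quadratic and factoring it in the form demanded by the statement requires careful algebraic bookkeeping using the identities $\alpha^q=h^{-1}(\beta-1)$, $\gamma\gamma^q=-1$, $h^{q^3+1}=-1$ and the non-degeneracy conclusions of Lemma~\ref{Lemma1}. The conceptual step — recognizing $\pm\alpha$ as the generic fixed points and seeing that the degeneracies collapse precisely to the polynomial identity of Lemma~\ref{Lemma1}(4) and to $h\in\mathbb{F}_q$ — is the key insight needed to match the output of the computation to the four cases of the lemma.
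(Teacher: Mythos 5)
Your proposal follows essentially the same route as the paper: solve the equation for $\sigma^q$ as a M\"obius function of $\sigma$, iterate with Frobenius-twisted coefficients, and impose $\sigma^{q^6}=\sigma$. The paper's proof is exactly this computation carried out explicitly (the rational expressions $\sigma^{q^i}=\ell_i(\sigma)/m_i(\sigma)$ and the final factorization
\[
(h^{2q^2+2}+h^{2q})(h^{4q^2+4}+14 h^{2q^2+2q+2}+h^{4q})(h^{q^2}-h^q)(\sigma+h^q+h^{q^2})(\sigma-h^q-h^{q^2})=0,
\]
obtained by computer algebra), so your setup, your handling of the degenerate case $a\sigma+b=0$, and your identification of $\pm(h^{q^2}+h^q)$ as the generic fixed points all match the paper. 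The one substantive discrepancy is in the part you leave hedged: you predict that the degenerate ($h$-only) factors reduce to $h\in\F_q$ or to $h^{4q^2+4}+14h^{2q^2+2q+2}+h^{4q}=0$, but the actual elimination produces a third factor $h^{2q^2+2}+h^{2q}$, equivalent to $h^{q^2-q+1}=\pm\sqrt{-1}$, which is what the second bullet of the statement must absorb; since you never execute the elimination, this factor is missing from your case list and the lemma's conclusion cannot be fully assembled from your outline as written.
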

\proof
First, note that $\sigma=0$ would imply $h^q(h^{q} + h)^q(h^{q^2+1}- 1)=0$ which is impossible by Lemma \ref{Lemma1}. Therefore $\sigma\neq 0$ and $\sigma^{q^i}=\frac{\ell_i(X)}{m_i(X)}$, where
{\footnotesize
\begin{eqnarray*}
\ell_1(X)&=& -(h^{q^2+1} - 1)(h^{q^2+1}X + h^{2q} + h^{q^2+q})\\
m_1(X)&=&h(h^qX + h^{q^2+q+1} + h^{2q^2+1})\\
\ell_2(X)&=&-(h^q + h)(2 h^{q^2+q+1}X + h^{2q^2+q+2} + h^{3q^2+2} + h^{3q} + h^{q^2+2q})\\
m_2(X)&=&h^{q+1}(h^{2q^2+2}X + h^{2q}X + 2h^{q^2+2q+1} + 2h^{2q^2+q+1})\\
\ell_3(X)&=&(h^q + h)^q(3 h^{2q^2+q+2}X + h^{3q}X + h^{3q^2+q+3} + h^{4q^2+3} + 3h^{q^2+3q+1}+ 3h^{2q^2+2q+1})\\
m_3(X)&=&h^{q^2+q}(h^{3q^2+3}X + 3h^{q^2+2q+1} X + 3h^{2q^2+2q+2} + 3 h^{3q^2+q+2} + h^{4q} + h^{q^2+3q})\\
\ell_4(X)&=&(h^{q^2+1} - 1)(h^{4q^2+4}X + 6 h^{2q^2+2q+2}X  + h^{4q}X + 4 h^{3q^2+2q+3} + 4 h^{4q^2+q+3} + 4 h^{q^2+4q+1} + 4 h^{2q^2+3q+1})\\
m_4(X)&=&h^{q^2}(4 h^{3q^2+q+3}X + 4h^{q^2+3q+1} X + h^{4q^2+q+4} + h^{5q^2+4} + 6 h^{2q^2+3q+2} + 6 h^{3q^2+2q+2} + h^{5q} + h^{q^2+4q})\\
\ell_5(X)&=&-(h^q + h)(h^{5q^2+5}X + 10h^{3q^2+2q+3} X + 5h^{q^2+4q+1} X + 5h^{4q^2+2q+4}  + 5 h^{5q^2+q+4} + 10h^{2q^2+4q+2}\\
		&& + 10h^{3q^2+3q+2} + h^{6q} + h^{q^2+5q})\\
m_5(X)&=&5h^{4q^2+q+4} X  + 10 h^{2q^2+3q+2}X + h^{5q}X + h^{5q^2+q+5} + h^{6q^2+5} + 10 h^{3q^2+3q+3} + 10h^{4q^2+2q+3} \\
		&&+ 5 h^{q^2+5q+1} + 5h^{2q^2+4q+1}\\
\ell_6(X)&=&(h^q + h)^q(6 h^{5q^2+q+5}X + 20h^{q^3+3q+3} X + 6 Xh^{q^2+5q+1} + h^{6q^2+q+6}+ h^{7q^2+6} + 15 h^{4q^2+3q+4} \\
		&&+ 15 h^{5q^2+2q+4} + 15 h^{2q^2+5q+2} + 15 h^{3q^2+4q+2} +  h^{7q} + h^{q^2+6q})\\
m_6(X)&=&h^{6q^2+6}X+ 15h^{4q^2+2q+4} X + 15 h^{2q^2+4q+2}X + h^{q^6}X + 6h^{5q^2+2q+5} + 6 h^{6q^2+q+5} + 20h^{3q^2+4q+3}\\
		&&+ 20 h^{4q^2+3q+3} + 6 h^{q^2+6q+1} + 6 h^{2q^2+5q+1}.\\
\end{eqnarray*}}

Since $\sigma^{q^6}=\sigma$, in particular
$$(h^{2q^2+2}+h^{2q})(h^{4q^2+4}+14 h^{2q^2+2q+2}+h^{4q})( h^{q^2}-h^q)(\sigma + h^q + h^{q^2})(\sigma - h^q - h^{q^2})=0.$$
The claim follows from Lemma \ref{Lemma1}.
\endproof

\begin{lemma}\label{Lemma3}
Let $h \in \mathbb{F}_{q^6}$ be such that $h^{q^3+1}=-1$, $h^4=1$.
If a root $\sigma$ of the polynomial
$$h^{q+1}T^{q^2+1} + (h^q + h)^{q+1}\in \mathbb{F}_{q^6}[T]$$
belongs to $\mathbb{F}_{q^6}$, then
$$\sigma=\pm(h^{q^2}+h^q).$$
\end{lemma}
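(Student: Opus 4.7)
The plan is to reduce the equation satisfied by $\sigma$ to the form $\sigma^{q^2+1}=(h^{q^2}+h^q)^{q^2+1}$, and then exploit the fact that very few $(q^2+1)$-th roots of unity lie in $\mathbb{F}_{q^6}^*$.

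First I would use $h^{q^3+1}=-1$ to rewrite $h^{q^3}=-h^{-1}$ and, raising to the $q$-th power, $h^{q^4}=-h^{-q}$. Setting $\alpha:=h^{q^2}+h^q$, applying Frobenius gives
$$\alpha^{q^2}=h^{q^4}+h^{q^3}=-h^{-q}-h^{-1}=-\frac{h^q+h}{h^{q+1}},$$
hence
$$\alpha^{q^2+1}=-\frac{(h^{q^2}+h^q)(h^q+h)}{h^{q+1}}=-\frac{(h^q+h)^{q+1}}{h^{q+1}}.$$
On the other hand, the equation $h^{q+1}\sigma^{q^2+1}+(h^q+h)^{q+1}=0$ immediately yields $\sigma^{q^2+1}=-(h^q+h)^{q+1}/h^{q+1}=\alpha^{q^2+1}$.

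If $\alpha=0$ then $\sigma=0=\pm\alpha$ and we are done. Otherwise put $\tau:=\sigma/\alpha\in\mathbb{F}_{q^6}^*$; then $\tau^{q^2+1}=1$, so the order of $\tau$ divides $\gcd(q^2+1,q^6-1)$. A short arithmetic check — factoring $q^6-1=(q^2-1)(q^2-q+1)(q^2+q+1)$ and using $q^2\pm q+1\equiv \pm q\pmod{q^2+1}$ together with $\gcd(q^2+1,q)=1$ — reduces this gcd to $\gcd(q^2+1,q^2-1)=\gcd(q^2+1,2)$, which equals $2$ for $q$ odd and $1$ for $q$ even. Hence $\tau=\pm 1$ and $\sigma=\pm(h^{q^2}+h^q)$.

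I do not foresee a real obstacle: the entire argument hinges on the one-line identity $\alpha^{q^2+1}=-(h^q+h)^{q+1}/h^{q+1}$ coming from $h^{q^3+1}=-1$, together with an elementary cyclotomic gcd computation. For completeness one should notice that in characteristic $2$ the assumption $h^4=1$ forces $h=1$, whence $\alpha=0$ and $\sigma=0$; and in odd characteristic $h^4=1$ with $h^{q^3+1}=-1$ forces $h$ to have order exactly $4$, which requires $q\equiv 1\pmod 4$ and then gives $\alpha=2h\neq 0$, so the case analysis above is nontrivial only in this subcase.
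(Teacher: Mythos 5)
Your proof is correct, and it takes a genuinely different route from the paper's. The paper first rules out $\sigma=0$ via Lemma \ref{Lemma1}, then rewrites the defining equation as $\sigma^{q^2}=-(h^{q-1}+1)^{q+1}/\sigma$ and iterates the $q^2$-Frobenius twice more to express $\sigma^{q^6}$ as $(h^q+h)^{2q}/\sigma$; imposing $\sigma^{q^6}=\sigma$ then gives $\sigma^2=(h^{q^2}+h^q)^2$. You instead observe that the candidate $\alpha=h^{q^2}+h^q$ satisfies the same norm-type equation $\alpha^{q^2+1}=-(h^q+h)^{q+1}/h^{q+1}$ (a one-line consequence of $h^{q^3+1}=-1$), so that $\tau=\sigma/\alpha$ is a $(q^2+1)$-st root of unity in $\F_{q^6}^*$, and the computation $\gcd(q^2+1,q^6-1)=\gcd(q^2+1,2)$ forces $\tau=\pm1$. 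Both arguments hinge on $\sigma\in\F_{q^6}$, but yours replaces the iterated Frobenius bookkeeping with a clean cyclotomic gcd, and it handles the degenerate case $\alpha=0$ internally rather than by appeal to Lemma \ref{Lemma1}. That last point matters here: as printed, the lemma's hypothesis reads $h^4=1$, which is evidently a typo for $h^4\neq1$ (the lemma is invoked in Theorem \ref{Th:GeneralCase} under $h^4\neq1$, and the paper's own proof cites Lemma \ref{Lemma1}, which assumes $h^4\neq1$); your argument is insensitive to which hypothesis is intended, whereas the paper's first step would not go through under the literal hypothesis. One stylistic remark: your closing paragraph's case analysis of $h^4=1$ is not needed for the proof itself, since the main argument already covers all cases.
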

\proof
If $\sigma=0$, then $h^q + h=0$, a contradiction to Lemma \ref{Lemma1}. So we can suppose $\sigma\neq 0$. Then
\begin{eqnarray*}
\sigma^{q^2}&=& -\frac{(h^{q-1}+1)^{q+1}}{\sigma}\\
\sigma^{q^4}&=& (h^{q-1}+1)^{q^3+q^2-q-1}\sigma\\
\sigma^{q^6}&=& -\frac{(h^{q-1}+1)^{q^5+q^4-q^3-q^2+q+1}}{\sigma}=\frac{(h^{q}+h)^{2q}}{\sigma}.\\
\end{eqnarray*}
So, $\sigma=\pm(h^{q^2}+h^{q})$.
\endproof

%
%

Let $h\in \mathbb{F}_{q^6}$ be such that $h^{q^3+1}=-1$, $h^4\neq 1$. By Theorem \ref{Th:Sottobicchiere} the polynomial
$$f_h (x) = h^{q-1}x^q-(h^{q^2-1})x^{q^2}+x^{q^4}+x^{q^5}$$
is scattered if and only if for each $m\in \mathbb{F}_{q^6}$ the determinant of the following two matrices do not vanish at the same time

\begin{equation}
M_6(m)=\left(
\begin{array}{cccccc}
m &			h^{q-1} &			-h^{q^2-1} &		0 &			1 &				1\\
1 &			m^q	&			h^{q^2-q} &		h^{-q-1} &		0 &				1\\
1 &			1 &				m^{q^2} &			-h^{-q^2-1} &	h^{-q^2-q} &		0\\
0 &			1 &				1 &				m^{q^3} &		h^{1-q} &			-h^{1-q^2}\\	
h^{q+1} &		0 &				1 &				1 &			m^{q^4}&			h^{q-q^2}\\	
-h^{q^2+1} &	h^{q^2+q} &		0 &				1 &			1 &				m^{q^5}\\
\end{array}\right),
\end{equation}
\begin{equation}
M_5(m)=\left(
\begin{array}{ccccc}
h^{q-1} &			-h^{q^2-1} &		0 &			1 &				1\\
m^q	&			h^{q^2-q} &		h^{-q-1} &		0 &				1\\
1 &				m^{q^2} &			-h^{-q^2-1} &	h^{-q^2-q} &		0\\
1 &				1 &				m^{q^3} &		h^{1-q} &			-h^{1-q^2}\\	
0 &				1 &				1 &			m^{q^4}&			h^{q-q^2}\\	
\end{array}\right).
\end{equation}

\begin{theorem}\label{Th:p=2}
Let $h\in \mathbb{F}_{q^6}$, $q=2^s$, be such that $h^{q^3+1}=1$.
Then the polynomial $f_h (x) = h^{q-1}x^q-(h^{q^2-1})x^{q^2}+x^{q^4}+x^{q^5}$ is not scattered.
\end{theorem}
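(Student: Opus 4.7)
By Theorem~\ref{Th:Sottobicchiere}, it suffices to produce an $m_0 \in \mathbb{F}_{q^6}$ such that the kernel of the $\mathbb{F}_q$-linear map $x \mapsto f_h(x) - m_0 x$ has $\mathbb{F}_q$-dimension at least~$2$; this forces $\det M_6(m_0) = \det M_5(m_0) = 0$ simultaneously. My strategy is to use the Desarguesian $\mathbb{F}_{q^3}$-spread
\[
\mathbb{F}_{q^6} = \bigcup_{\mu^{q^3+1}=1} V_\mu, \qquad V_\mu = \{x \in \mathbb{F}_{q^6}: x^{q^3}=\mu x\}.
\]
Each spread line $V_\mu$ is $3$-dimensional over $\mathbb{F}_q$ and any two distinct lines meet only at~$0$.

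Since $h^{q^3+1}=1$, both $h^{q+1}$ and $h^{q^2+1}$ satisfy $\mu^{q^3+1}=1$, so the lines $V_{h^{q+1}}$ and $V_{h^{q^2+1}}$ are well-defined and distinct whenever $h \notin \mathbb{F}_q$. In characteristic~$2$, $f_h$ collapses on each of them: using $h^{q^3+1}=1$ to substitute $x^{q^4}$ and $x^{q^5}$ in terms of $x^q$ and $x^{q^2}$, one pair of terms cancels in char~$2$, leaving
\[
f_h(x) = h^{q-1}\bigl(1+h^{q^2+1}\bigr)\,x^q \qquad \text{for } x \in V_{h^{q+1}},
\]
\[
f_h(x) = h^{q^2-q}\bigl(1+h^{q-1}\bigr)\,x^{q^2} \qquad \text{for } x \in V_{h^{q^2+1}}.
\]
Parameterising each line by $c \in \mathbb{F}_{q^3}$ via generators $\xi$ and $\eta$, the equation $f_h(x) = m_0 x$ turns into
\[
c^{q-1} = \frac{m_0}{h^{q-1}(1+h^{q^2+1})\,\xi^{q-1}}, \qquad c^{q^2-1} = \frac{m_0}{h^{q^2-q}(1+h^{q-1})\,\eta^{q^2-1}},
\]
respectively. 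Each is solvable in $\mathbb{F}_{q^3}^*$ exactly when the right-hand side lies in the image (a subgroup of order $q^2+q+1$) of the corresponding power map, and in that case it has $q-1$ solutions, contributing a $1$-dimensional $\mathbb{F}_q$-subspace to $\ker(f_h - m_0\cdot\mathrm{id})$.

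The main step is to find $m_0$ for which both right-hand sides lie in the respective images; the two $1$-dimensional contributions, coming from lines with trivial intersection, then span a $2$-dimensional subspace of $\ker(f_h - m_0\cdot\mathrm{id})$, forcing non-scatteredness. This coset-matching reduces to showing that a specific ratio in $h, h^q, h^{q^2}$ and the line-generators is a $(q^2+q+1)$-th power in $\mathbb{F}_{q^6}^*$, an identity that should follow from $h^{q^3+1}=1$ in conjunction with the characteristic-$2$ cancellations. The edge case $h \in \mathbb{F}_q$ (which in char~$2$ combined with $h^{q^3+1}=1$ forces $h=1$) is treated directly, as in the $q$-even subcase of Theorem~\ref{caso1}: $f_1(x) = x^q+x^{q^2}+x^{q^4}+x^{q^5}$ vanishes identically on the $2$-dimensional subspace $\mathbb{F}_{q^2}\subset \mathbb{F}_{q^6}$. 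The hardest part is the final norm-identity simplification, a self-contained manipulation under the single constraint $h^{q^3+1}=1$.
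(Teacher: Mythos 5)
Your spread-line reduction is correct as far as it goes: on $V_{h^{q+1}}$ one indeed has $f_h(x)=h^{q-1}(1+h^{q^2+1})x^q$ and on $V_{h^{q^2+1}}$ one has $f_h(x)=h^{q^2-q}(1+h^{q-1})x^{q^2}$, and the edge case $h=1$ (where $f_1$ vanishes on $\F_{q^2}$) is fine. But the step you defer --- that some $m_0$ is admissible for both lines simultaneously --- is not merely unproven; it is \emph{false} for every $h\neq 1$. Write $A=h^{q-1}(1+h^{q^2+1})$ and $B=h^{q^2-q}(1+h^{q-1})$, and assume $A,B\neq 0$ (if either vanishes, $f_h$ is identically zero on that line and $m_0=0$ settles the theorem at once, but this does not rescue the general argument). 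A nonzero $m_0$ satisfies $Ax^{q}=m_0x$ for some $x\in V_{h^{q+1}}\setminus\{0\}$ if and only if $m_0^{q^2+q+1}=A^{q^2+q+1}x^{q^3-1}=A^{q^2+q+1}h^{q+1}$, since $\{c^{q-1}:c\in\F_{q^3}^*\}$ is exactly the subgroup of $\F_{q^6}^*$ killed by $q^2+q+1$; likewise $m_0$ is admissible for the second line if and only if $m_0^{q^2+q+1}=B^{q^2+q+1}h^{q^2+q}$. So your two cosets meet if and only if these two constants coincide. Using $h^{q^3}=h^{-1}$, $h^{q^4}=h^{-q}$, the identity $1+h^{q^2+1}=h^{q^2+1}(1+h^{q-1})^{q^2}$ and $(1+h^{q-1})^{q^3-1}=h^{1-q}$, a direct computation gives
\[
\frac{A^{q^2+q+1}\,h^{q+1}}{B^{q^2+q+1}\,h^{q^2+q}}=h^{2q},
\]
which equals $1$ only when $h=1$, since Frobenius is injective in characteristic $2$. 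Hence for $h\neq 1$ the two collapsed lines never carry eigenvectors of $f_h$ for the same eigenvalue $m_0$; the $2$-dimensional kernel witnessing non-scatteredness must come from other spread lines, on which $f_h$ does not reduce to a monomial and a single line can contribute dimension $2$. Your proof cannot be completed along the proposed route.

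For comparison, the paper's proof is a one-line verification: it exhibits the explicit witness $\overline m=h^{q^2}+h^q$, computes its conjugates $\overline m^{q^i}$ via $h^{q^3}=h^{-1}$, and checks directly that $\det M_5(\overline m)=\det M_6(\overline m)=0$. If you want a conceptual route, you should look for the kernel of $f_h-\overline m\,\mathrm{id}$ on lines $V_\mu$ other than the two you singled out, rather than trying to match cosets across them.
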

\proof
Consider $\overline{m}=h^{q^2}+h^q$. So,
$$\overline{m}^{q}=1/h+h^{q^2}, \quad \overline{m}^{q^2}=1/h^q+1/h, \quad \overline{m}^{q^3}=1/h^{q^2}+1/h^{q}, \quad \overline{m}^{q^4}=h+1/h^{q^2}, \quad \overline{m}^{q^5}=h^q+h.$$
By direct checking, in this case, both $\det(M_6(\overline{m}))=\det(M_5(\overline{m}))=0$ and therefore $f_h(x)$ is not scattered.
\endproof

\begin{theorem}\label{Th:GeneralCase}
Let $h\in \mathbb{F}_{q^6}$, $q=p^s$, $p>2$, be such that $h^{q^3+1}=-1$ and $h \notin \mathbb{F}_q$.
Then the polynomial $f_h (x) = h^{q-1}x^q-(h^{q^2-1})x^{q^2}+x^{q^4}+x^{q^5}$ is scattered.
\end{theorem}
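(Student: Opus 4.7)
The plan is to follow the resultant-based strategy of Theorem~\ref{caso1} with the more elaborate coefficients involving $h$, $h^q$, $h^{q^2}$. Assume for contradiction that some $m_0 \in \mathbb{F}_{q^6}$ satisfies $\det M_5(m_0) = \det M_6(m_0) = 0$. A preliminary observation is that $p > 2$, $h \notin \mathbb{F}_q$, and $h^{q^3+1} = -1$ together force $h^4 \neq 1$: the only candidates $\pm\sqrt{-1}$ (arising when $q \equiv 3 \pmod 4$) lead to $h^{q^3+1} = 1$, a contradiction. Hence Lemma~\ref{Lemma2}, rather than Lemma~\ref{Lemma3}, is the applicable auxiliary result.

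Introduce indeterminates $X, Y, Z, U, V, W$ representing $m_0, m_0^q, \ldots, m_0^{q^5}$ and set $P_1 := \det M_5$, $P_2 := \det M_6$ as polynomials over $\mathbb{F}_{q^6}$. Since $(h^{q^2})^q = h^{q^3} = -1/h$, applying Frobenius $j$ times to each equation $\det M_i(m_0) = 0$ yields shifted polynomials $P_i^{(j)}$ obtained by the substitution $(h, h^q, h^{q^2}) \mapsto (h^q, h^{q^2}, -1/h)$ combined with the cyclic shift $(X, Y, Z, U, V, W) \mapsto (Y, Z, U, V, W, X)$; all twelve of these vanish at the tuple $(m_0, m_0^q, \ldots, m_0^{q^5})$.

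The core computation is to eliminate $W$, $V$, $U$, $Z$ via successive resultants among the $P_i^{(j)}$. After discarding spurious factors that are nonzero by Lemma~\ref{Lemma1} (such as $h^q + h$, $h^{q^2+1} - 1$, $h^{q^2+1} \pm h^q$, and $h^{4q^2+4} + 14 h^{2q^2+2q+2} + h^{4q}$), one extracts a bilinear relation between $X$ and $Y$ from which $Y$ is expressed rationally in $X$; iterating through the Frobenius closure $Z = g(Y)$, $U = g(Z)$, $\ldots$, the consistency requirement ultimately forces an element $\sigma \in \mathbb{F}_{q^6}$ built from $m_0$ to be a root of the polynomial displayed in Lemma~\ref{Lemma2}. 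Since the alternatives $p = 2$ and $h \in \mathbb{F}_q$ in that lemma are excluded by hypothesis, only two possibilities survive.

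The remaining cases are: (i) $\sigma = \pm(h^{q^2} + h^q)$, and (ii) $q = 3^{2s}$ with $h^{q^2 - q + 1} = \pm\sqrt{-1}$. Case (i) is handled by substituting the explicit value directly into $\det M_5$ and $\det M_6$ and verifying that the resulting expressions factor into terms which are nonzero by Lemma~\ref{Lemma1}; notice that the analogous substitution \emph{does} produce a common zero in characteristic $2$ (as exhibited in Theorem~\ref{Th:p=2}), so the argument genuinely requires $p > 2$. Case (ii) is ruled out by feeding the additional identity $h^{q^2-q+1} = \pm\sqrt{-1}$ into the system and running a direct computation in characteristic $3$. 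The main obstacle throughout is the size of the resultant chain, which is inevitably computer-algebra assisted and demands careful tracking of which factors are cancellable under the running hypotheses; a secondary delicate point is the explicit verification in case (i), which must crucially distinguish odd from even characteristic.
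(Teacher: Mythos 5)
Your overall strategy is the right one and matches the paper's in outline: assume a common root $m_0$ of $\det M_5$ and $\det M_6$, pass to indeterminates $X,\dots,W$ with the Frobenius shift acting as $(h,h^q,h^{q^2})\mapsto(h^q,h^{q^2},-1/h)$ together with a cyclic shift of the variables, eliminate variables by resultants, discard factors that Lemma \ref{Lemma1} rules out, and reduce to Lemma \ref{Lemma2}, whose surviving alternatives $X=\pm(h^{q^2}+h^q)$ and $q=3^{2s}$, $h^{q^2-q+1}=\pm\sqrt{-1}$ are then killed by direct substitution. Your preliminary observations ($h^4\neq1$, and the fact that the substitution $\overline m=h^{q^2}+h^q$ genuinely works in characteristic $2$, cf.\ Theorem \ref{Th:p=2}) are correct and relevant.

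There is, however, a concrete gap in the middle of your plan. After eliminating $W$ via $(\det M_5)^{q^3}=0$ and $U$ via $(\det M_5)^{q^5}=0$, the equation $\det M_6=0$ does \emph{not} collapse to a single bilinear relation in $X,Y$: it factors as $h^{q^2+2q+1}\varphi_1(X,Y)\,\varphi_2(X,Y,Z,V)\,\varphi_3(X,Y,Z,V)=0$, where only $\varphi_1$ is the bilinear factor that feeds into Lemma \ref{Lemma2} (via $Y=X^q$), while $\varphi_2$ and $\varphi_3$ are quadrilinear in $X,Y,Z,V$ and are emphatically not ``spurious factors that are nonzero by Lemma \ref{Lemma1}.'' Each of the cases $\varphi_2=0$ and $\varphi_3=0$ (with $\varphi_1\neq0$) needs its own elimination of $V$ against $\det M_5=0$, which in turn splits into four further factors; three of these are resolved by solving for $Y$ or $Z$ and landing back on the values $X=\pm(h^{q^2}+h^q)$, but one of them, $h^{q+1}XZ+(h^q+h)^{q+1}=0$ with $Z=X^{q^2}$, is an equation of the form $h^{q+1}T^{q^2+1}+(h^q+h)^{q+1}=0$ and is handled by Lemma \ref{Lemma3}, not Lemma \ref{Lemma2}. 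So your assertion that Lemma \ref{Lemma3} is inapplicable is premature: its conclusion is exactly what is needed in that sub-case (its proof only uses $h^{q^3}=-1/h$ and $h^q+h\neq0$). As written, your resultant chain would stall at the three-factor product, leaving the branches $\varphi_2=0$ and $\varphi_3=0$ unaddressed.
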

\begin{proof}
First we note that  $h^4\neq 1$ since $q$ is odd, $h\notin \mathbb{F}_q$, and $h^{q^3+1}=-1$.
Suppose that $f(x)$ is not scattered. Then $\det(M_6(m_0))=\det(M_5(m_0))=0$ for some $m_0\in\F_{q^6}$.
Consider
$$X=m_0, \quad Y=m_0^q, \quad Z=m_0^{q^2}, \quad U=m_0^{q^3}, \quad V= m_0^{q^4}, \quad W = m_0^{q^5}.$$
With a procedure similar to the one in the proof of Theorem \ref{caso1}, we will compute resultants starting from the polynomials
associated with $\det(M_6(m_0))$, $\det(M_5(m_0))^{q^3}$, and $\det(M_5(m_0))^{q^5}$.
Eliminating $W$ using $\det(M_5(m_0))^{q^3}=0$ and $U$ using $\det(M_5(m_0))^{q^5}=0$, one gets from $\det(M_6(m_0))=0$
$$h^{q^2+2q+1}\varphi_1(X,Y)\varphi_2(X,Y,Z,V)\varphi_3(X,Y,Z,V)=0,$$
where
{\scriptsize
\begin{eqnarray*}
\varphi_1(X,Y)&=&h^{q+1}XY + h^{2q^2+2}X -h^{q^2+1} X + h^{q^2+q+2}Y + h^{2q^2+2}Y + h^{q^2+2q+1} + h^{2q^2+q+1} - h^{2q}- h^{q^2+q};\\
\varphi_2(X,Y,Z,V)&=&h^{q^2+q+2}X Y Z V- h^{q^2+q+2}X Y Z - h^2X Y - h^{q+1}X Y - h^{2q^2+q+1}X Z V- h^{2q^2+2}X V - h^{2q^2+q+1}X V \\
				&&-h^{q^2+2q+3}Y Z - h^{2q^2+q+3}Y Z - h^{q^2+q+2}Y - h^{2q^2+2}Y - h^{q^2+2q+1}Y - h^{2q^2+q+1}Y - h^{q^2+2q+1}Z V\\
				&&- h^{2q^2+q+1}Z V - h^{2q^2+q+1}V  - h^{3q^2+1}V - h^{2q^2+2q}V - h^{3q^2+q}V  + h^{2q^2+q+3} + h^{3q^2+3} + h^{2q^2+2q+2}\\
				&&+h^{3q^2+q+2} - 2h^{q^2+q+2} - 2h^{2q^2+2} - 2 h^{q^2+2q+1}- 2h^{2q^2+q+1} + h^{q+1}+ h^{q^2+1}+ h^{2q} + h^{q^2+q};\\
\varphi_3(X,Y,Z,V)&=&h^{q^2+q+2}X Y Z V+ h^{q^2+q+2}X Y Z - h^2X Y - h^{q+1}X Y + h^{2q^2+q+1}X Z V- h^{2q^2+2}X V - h^{2q^2+q+1}X V \\
				&&-h^{q^2+2q+3}Y Z - h^{2q^2+q+3}Y Z + h^{q^2+q+2}Y + h^{2q^2+2}Y + h^{q^2+2q+1}Y + h^{2q^2+q+1}Y - h^{q^2+2q+1}Z V\\
				&&- h^{2q^2+q+1}Z V + h^{2q^2+q+1}V  + h^{3q^2+1}V + h^{2q^2+2q}V + h^{3q^2+q}V  + h^{2q^2+q+3} + h^{3q^2+3} + h^{2q^2+2q+2}\\
				&&+h^{3q^2+q+2} - 2h^{q^2+q+2} - 2h^{2q^2+2} - 2 h^{q^2+2q+1}- 2h^{2q^2+q+1} + h^{q+1}+ h^{q^2+1}+ h^{2q} + h^{q^2+q}.\\	
\end{eqnarray*}}
\begin{itemize}
\item If $\varphi_1(X,Y)=0$, then by Lemma \ref{Lemma2} either $q=3^{2s}$ and $h^{q^2-q+1}=\pm \sqrt{-1}$, or $X=\pm(h^{q^2}+h^q)$.

In this last case,
\begin{eqnarray}\label{Valori}
Y=\pm(-h^{-1}+h^{q^2}), \qquad Z=\pm(-h^{-q}-h^{-1}), \qquad U=\pm(-h^{-q^2}-h^{-q})\nonumber\\
V=\pm(h-h^{-q^2}), \qquad W=\pm(h^q+h).
\end{eqnarray}
By substituting in $\det(M_5(m_0))$ one obtains
$$4(h+h^q)^{q+1}(h^{q^2+1}-1)(h^{q^2+1}-h^q)=0$$
and
$$4(h+h^q)^{q+1}(h^{q^2+1}-1)(h^{q^2+1}+h^q)=0,$$
respectively. Both are not possible due to Lemma \ref{Lemma1}.

Consider now the case $q=3^{2s}$, $h^{q^2-q+1}=\pm \sqrt{-1}$ and $X\neq \pm (h^{q^2}+h^q)$. So, using $\varphi_1(X,Y)=0$ and $h^{q^2-q+1}=\pm \sqrt{-1}$,
$$\det(M_5(m_0))=0 \Longrightarrow h^{q^2+2q+1}(h^{q^2}+h^q)(h^q+h)(h^{q^2+1}-1)(h^{q^2+q}+h^q)^3(h^{q^2+q}-h^q)^3\cdot$$
$$ \cdot (h^{2q^2+2}-h^{q^2+1}+h^{2q})(X+h^q+h^{q^2})^2(X-h^q-h^{q^2})^2=0. $$
By Lemma \ref{Lemma1} we get
\[ h^{2q^2+2}-h^{q^2+1}+h^{2q}=0, \]
which yields to a contradiction.

\item If $\varphi_2(X,Y,Z,V)=0$ and $\varphi_1(X,Y)\neq 0$, eliminating $V$ in $\det(M_5(m_0))=0$ one gets
\begin{eqnarray*}
2h^{3q^2+2q+1}
(h^{q+2}Y Z - h^{q^2+2} - h^{q^2+q+1} + h^q + h)\cdot\\
(hXY + h^{q^2+q+1} + h^{2q^2+1} - h^{q^2} - h^q)\cdot\\
(h^{q+1}XZ + h^{q+1} + h^{q^2+1}+ h^{2q} + h^{q^2+q})\cdot\\
(h^{q+2}YZ + hY + h^qY - h^{q^2+q+1}Z + h^qZ - h^{q^2+2} - h^{q^2+q+1} + h^q + h)
=0.
\end{eqnarray*}
\begin{itemize}
\item If $h^{q+2}Y Z - h^{q^2+2} - h^{q^2+q+1} + h^q + h=0$ then, from
$$Z=\frac{h^{q^2+2} + h^{q^2+q+1} - h^q - h}{h^{q+2}Y},$$
$\det(M_5)=0$ gives
$$(h^q+h)^{q+1}(hY - h^{q^2+1}+ 1)(hY + h^{q^2+1}- 1)=0.$$
So, \eqref{Valori} holds and as in the case $\varphi_1(X,Y)=0$ a contradiction arises.
\item If $hXY + h^{q^2+q+1} + h^{2q^2+1} - h^{q^2} - h^q=0$ then, from
$$Y=\frac{-h^{q^2+q+1} - h^{2q^2+1} + h^{q^2} + h^q}{hX},$$
the equation $\det(M_5(m_0))=0$ yields
$$(h^q+h)(h^{q^2+1}-1)(X-h^{q^2}-h^q)(X+h^{q^2}+h^q)=0.$$
So, \eqref{Valori} holds and as in the case $\varphi_1(X,Y)=0$, a contradiction.
\item If $h^{q+1}XZ + h^{q+1} + h^{q^2+1}+ h^{2q} + h^{q^2+q}=0$ then by Lemma \ref{Lemma3}
$$(X-h^{q^2}-h^q)(X+h^{q^2}+h^q)=0,$$
again a contradiction as before.
\item If $h^{q+2}YZ + hY + h^qY - h^{q^2+q+1}Z + h^qZ - h^{q^2+2} - h^{q^2+q+1} + h^q + h=0$ then
$$Z = -\frac{(h^q+h)Y - h^{q^2+2} - h^{q^2+q+1} + h^q + h}{ h^{q+2}Y - h^{q^2+q+1} + h^q}.$$
So, substituting  $U=Z^q$, $V=Z^{q^2}$, $W=Z^{q^3}$, $X=Z^{q^4}$ in $\det(M_5(m_0))=0$ we get
$$(h - 1)^{q+1}(h+1)^{q+1}(h^q+h)^{q+1}(h^{q^2+1}-1)(hY-h^{q^2+1}+1)^2(hY+h^{q^2+1}-1)^2=0.$$
By Lemma \ref{Lemma1},  $(hY-h^{q^2+1}+1)(hY+h^{q^2+1}-1)=0$. Since $Y=\pm(h^{q^2}-1/h)$ then  \eqref{Valori} holds and a contradiction arises as in the case $\varphi_1(X,Y)=0$.
\end{itemize}
\item If $\varphi_3(X,Y,Z,V)=0$ and $\varphi_1(X,Y)\neq 0$, eliminating $U$ from $\det(M_5(m_0))=0=\det(M_5(m_0))^{q^5}$ and then eliminating $V$ using $\varphi_3(X,Y,Z,V)=0$ one gets
\begin{eqnarray*}
2h^{3q^2+q+1}(h^q+h)^q
(h^{q+2}Y Z - h^{q^2+2} - h^{q^2+q+1} + h^q + h)^2\cdot\\
(hXY + h^{q^2+q+1} + h^{2q^2+1} - h^{q^2} - h^q)\cdot\\
(h^{q+1}XZ + h^{q+1} + h^{q^2+1}+ h^{2q} + h^{q^2+q})=0.
\end{eqnarray*}
A contradiction follows as in the case $\varphi_2(X,Y,Z,V)=0$ and $\varphi_1(X,Y)\neq 0$.
\end{itemize}
\end{proof}

\section{The equivalence issue}\label{EquivIssue}

We will deal with the linear sets $\mathcal L_h=L_{f_h}$ associated with the polynomials defined in (\ref{ourpol}).
Note that when $h \in \F_q$, such a linear set coincide with the one introduced in \cite[Section 5]{ZZ}.

\subsection{Preliminary results}

We start by listing the non-equivalent (under the action of $\Gamma\mathrm{L}(2,q^6)$) maximum scattered subspaces of $\F_{q^6}^2$, i.e.\ subspaces defining maximum scattered linear sets.

\begin{example}\label{exKnownscattered}
\begin{enumerate}
\item $U^{1}:= \{(x,x^{q}) \colon x\in \F_{q^6}\}$, defining the linear set of pseudoregulus type, see \cite{BL2000,CsZ20162};
\item $U^{2}_{\delta}:= \{(x,\delta x^{q} + x^{q^{5}})\colon x\in \F_{q^6}\}$, $\N_{q^6/q}(\delta)\notin \{0,1\}$, defining the linear set of LP-type, see \cite{LP2001,LMPT2015,LTZ,Sh};
\item $U^{3}_{\delta}:= \{(x, x^{q}+\delta x^{q^{4}})\colon x\in \F_{q^{6}}\}$, $\N_{q^6/q^{3}}(\delta) \notin \{0,1\}$, satisfying further conditions on $\delta$ and $q$, see \cite[Theorems 7.1 and 7.2]{CMPZ} and \cite{PZ} \footnote{Here $q>2$, otherwise it is not scattered.};
\item $U^{4}_{\delta}:=\{(x, x^q+x^{q^3}+\delta x^{q^5}) \colon x \in \F_{q^6}\}$, $q$ odd and $\delta^2+\delta=1$, see \cite{CsMZ2018,MMZ}.
\end{enumerate}
\end{example}

In order to simplify the notation, we will denote by $L^1$ and $L^{i}_{\delta}$ the $\F_q$-linear set defined by $U^{1}$ and $U^{i}_{\delta}$, respectively.
We will also use the following notation: $\mathcal{U}_h:=U_{h^{q-1}x^q-h^{q^2-1}x^{q^2}+x^{q^4}+x^{q^5}}$.

\begin{remark}
Consider the non-degenerate symmetric bilinear form of $\F_{q^6}$ over $\F_q$ defined by
\[ \la x,y\ra= \Tr_{q^6/q}(xy), \]
\noindent for each $x,y \in \F_{q^6}$. Then the \emph{adjoint} $\hat{f}$ of the linearized polynomial $\displaystyle f(x)=\sum_{i=0}^{5} a_ix^{q^i} \in \tilde{\mathcal{L}}_{6,q}$ with respect to the bilinear form $\la,\ra$ is
\[ \hat{f}(x)=\sum_{i=0}^{5} a_i^{q^{6-i}}x^{q^{6-i}}, \]
i.e.
\[ \Tr_{q^6/q}(xf(y))=\Tr_{q^6/q}(y\hat{f}(x)), \]
for any $x,y \in \F_{q^6}$.
\end{remark}

In \cite[Propositions 3.1, 4.1 \& 5.5]{CsMZ2018} the following result has been proved.

\begin{lemma}\label{equiv}
Let $L_f$ be one of the maximum scattered of $\PG(1,q^6)$ listed before.
Then a linear set $L_U$ of $\PG(1,q^6)$ is $\mathrm{P}\Gamma\mathrm{L}$-equivalent to $L_f$ if and only if $U$ is $\Gamma\mathrm{L}$-equivalent either to $U_f$ or to $U_{\hat{f}}$
Furthermore,  $L_U$ is $\mathrm{P}\Gamma\mathrm{L}$-equivalent to
$L^3_{\delta}$ if and only if $U$ is $\Gamma\mathrm{L}$-equivalent to $U_\delta^3$.
\end{lemma}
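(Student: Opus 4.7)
The plan is to prove both equivalences by classifying, up to $\Gamma\mathrm{L}(2,q^6)$, the $\F_q$-subspaces $W$ of $\F_{q^6}^2$ that produce the linear set $L_f$. The ``if'' direction of the first assertion reduces to showing that $L_f$ and $L_{\hat f}$ are always $\mathrm{P}\Gamma\mathrm{L}(2,q^6)$-equivalent. First I would note that, with respect to the non-degenerate bilinear form $\la x,y\ra=\Tr_{q^6/q}(xy)$, the subspace $U_{\hat f}$ is the image of $U_f$ under a symplectic polarity of $\mathrm{PG}(5,q)$ followed by the coordinate swap, and that this construction descends to an element of $\mathrm{P}\Gamma\mathrm{L}(2,q^6)$ sending $L_{\hat f}$ onto $L_f$; the reverse inclusion is analogous.

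For the ``only if'' direction I would invoke the criterion of \cite{CSZ2015,CMP} characterising when two subspaces define $\mathrm{P}\Gamma\mathrm{L}$-equivalent linear sets, which reduces the problem to determining the $\Gamma\mathrm{L}(2,q^6)$-orbits inside the $\F_q$-linear class of $L_f$. For each of the four families in Example~\ref{exKnownscattered} I would: (i) compute the stabiliser of $L_f$ in $\mathrm{P}\Gamma\mathrm{L}(2,q^6)$, using the explicit descriptions available in \cite{LMPT2015,LTZ,CsMZ2018,Sh}; (ii) let this stabiliser act on the linear class; (iii) exhibit representatives of its orbits and count them.

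For the pseudoregulus type $U^{1}$, the LP-type $U^{2}_\delta$ and the family $U^{4}_\delta$ I expect exactly two orbits, represented by $U_f$ and $U_{\hat f}$, the nontrivial coset being realised by the adjoint map; this yields the general statement. For the family $U^{3}_\delta$, the stronger ``single-orbit'' conclusion should follow from the fact that $\widehat{U^{3}_\delta}$ is itself $\Gamma\mathrm{L}(2,q^6)$-equivalent to $U^{3}_{\delta'}$ for an admissible parameter $\delta'$, so that the two potential orbits collapse into one. The hard part will be this last collapse: one must exploit the arithmetic constraint $\N_{q^6/q^3}(\delta)\notin\{0,1\}$, together with the additional hypotheses on $\delta$ and $q$ referenced in \cite[Theorems~7.1 and 7.2]{CMPZ}, to show that the adjoint really does land in the same $\Gamma\mathrm{L}$-orbit, and simultaneously to rule out spurious equivalences that would only exist over a proper extension of $\F_{q^6}$.
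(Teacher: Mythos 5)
The paper does not prove this lemma at all: it is imported verbatim from \cite[Propositions 3.1, 4.1 \& 5.5]{CsMZ2018}, so a blind proof has to reconstruct those propositions. Measured against that, your sketch has two genuine problems. First, in the ``if'' direction your mechanism is wrong even though the conclusion is right. The reason $L_f$ and $L_{\hat f}$ are $\mathrm{P}\Gamma\mathrm{L}$-equivalent is that they are literally the same point set: for every $m$, $\dim_{\F_q}\ker(f(x)-mx)=\dim_{\F_q}\ker(\hat f(x)-mx)$ because the Dickson matrices of $f-mx$ and $\hat f-mx$ are transposes of one another (up to conjugation), so the two subspaces meet each line $\la(1,m)\ra_{\F_{q^6}}$ in the same dimension. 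No polarity ``descends to an element of $\mathrm{P}\Gamma\mathrm{L}(2,q^6)$'' carrying $U_{\hat f}$ to $U_f$ --- if such a semilinear map existed, $U_f$ and $U_{\hat f}$ would be $\Gamma\mathrm{L}$-equivalent and the two-orbit dichotomy in the statement would be empty. (Also the ambient $\F_q$-projective space of $\F_{q^6}^2$ is $\mathrm{PG}(11,q)$, not $\mathrm{PG}(5,q)$.)

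Second, and more seriously, your plan for the ``only if'' direction is circular. Steps (ii)--(iii) --- ``let the stabiliser act on the linear class, exhibit representatives of its orbits and count them'' --- presuppose that you already know the linear class, i.e.\ the full set of subspaces $W$ with $L_W$ projectively equivalent to $L_f$; determining that set \emph{is} the assertion to be proved, and the stabiliser of $L_f$ in $\mathrm{P}\Gamma\mathrm{L}(2,q^6)$ gives no handle on it. The argument that actually works (and is what \cite{CSZ2015,CMP,CsMZ2018} do) is: normalise $W=U_g$ for a $q$-polynomial $g$, observe that $L_g=L_f$ forces $\dim\ker(g(x)-mx)=\dim\ker(f(x)-mx)$ for all $m\in\F_{q^6}$, convert this into explicit polynomial identities relating the coefficients of $g$ to those of $f$ and its conjugates, and solve these identities family by family; this computation is the entire content of the cited propositions and nothing in your outline replaces it. A symptom that the computations were not carried out: for the pseudoregulus type you ``expect exactly two orbits,'' but $U_{x^q}$ and $U_{x^{q^5}}=U_{\widehat{x^q}}$ lie in a single $\Gamma\mathrm{L}(2,q^6)$-orbit (swap the two coordinates), so there is only one. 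That does not contradict the lemma, but it shows the orbit structure cannot simply be guessed from the adjoint.
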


\smallskip

We will work in the following framework.
Let $x_0,\ldots,x_{5}$ be the homogeneous coordinates of $\PG(5,q^6)$ and let
\[ \Sigma=\{\la (x,x^{q},\ldots,x^{q^{5}}) \ra_{\F_{q^6}} \colon x \in\F_{q^6} \} \]
be a fixed canonical subgeometry of $\PG(5,q^6)$.
The collineation $\hat{\sigma}$ of $\PG(5,q^6)$ defined by
$\la(x_0,\ldots,x_{5})\ra_{\F_{q^6}}^{\hat{\sigma}}=\la(x_{5}^{q},x_0^{q},\ldots,x_{4}^{q})\ra_{\F_{q^6}}$ fixes precisely the points of $\Sigma$.
Note that if $\sigma$ is a collineation of $\PG(5,q^6)$ such that $\mathrm{Fix}(\sigma)=\Sigma$, then $\sigma=\hat{\sigma}^s$, with $s \in \{1,5\}$.

Let $\Gamma$ be a subspace of $\PG(5,q^6)$ of dimension $k\ge0$
such that $\Gamma \cap \Sigma=\emptyset$, and $\dim(\Gamma\cap\Gamma^\sigma)\ge k-2$.
Let $r$ be the least positive integer satisfying the condition
\begin{equation}\label{def_r}
\dim(\Gamma\cap\Gamma^\sigma\cap\Gamma^{\sigma^2}\cap\ldots\cap\Gamma^{\sigma^r})>k-2r.
\end{equation}
Then we will call the integer $r$ the \emph{intersection number of} $\Gamma$ w.r.t.\ $\sigma$ and we will denote it by $\mathrm{intn}_{\sigma}(\Gamma)$; see \cite{ZZ}.

Note that if $\hat\sigma$ is as above, then $\intn_{\hat\sigma}(\Gamma)=\intn_{\hat\sigma^5}(\Gamma)$ for any $\Gamma$.

As a consequence of the results of \cite{CsZ20162,ZZ} we have the following result.

\begin{result}\label{nopseudonoLP}
Let $L$ be a scattered linear set of $\Lambda=\PG(1,q^6)$ which can be realized in $\PG(5,q^6)$ as the projection of $\Sigma=\mathrm{Fix}(\sigma)$ from $\Gamma\simeq\PG(3,q^6)$ over $\Lambda$.
If $\mathrm{intn}_{\sigma}(\Gamma)\neq 1,2$, then $L$ is not equivalent to any linear set neither of pseudoregulus type nor of LP-type.
\end{result}

\subsection{$\mathcal{L}_h$ is new in most of the cases}

The linear set $\mathcal{L}_h$ can be obtained by projecting the canonical subgeometry $\Sigma=\{ \langle(x,x^q,x^{q^2},x^{q^3},x^{q^4},x^{q^5})\rangle_{\F_{q^6}} \colon x \in \F_{q^6}^* \}$ from
\[ \Gamma \colon \left\{ \begin{array}{ll} x_0=0 \\ h^{q-1}x_1-h^{q^2-1}x_2+x_4+x_5=0 \end{array} \right. \]
to
\[ \Lambda \colon \left\{ \begin{array}{llll} x_1=0\\ x_2=0\\ x_3=0\\ x_4=0. \end{array} \right. \]
Then
\[ \Gamma^{\hat\sigma} \colon \left\{ \begin{array}{ll} x_1=0 \\ h^{q^2-q}x_2+h^{-q-1}x_3+x_5+x_0=0 \end{array} \right. \,\,\, \text{and} \,\,\, \Gamma^{\hat\sigma^2} \colon \left\{ \begin{array}{ll} x_2=0 \\ -h^{-1-q^2}x_3+h^{-q^2-q}x_4+x_0+x_1=0. \end{array} \right. \]
Therefore,
\[ \Gamma\cap\Gamma^{\hat\sigma} \colon \left\{ \begin{array}{llll} x_0=0\\ x_1=0 \\ -h^{q^2-1}x_2+x_4+x_5=0 \\ h^{q^2-q}x_2+h^{-q-1}x_3+x_5=0 \end{array} \right. \,\,\, \text{and} \quad
\Gamma\cap\Gamma^{\hat\sigma}\cap\Gamma^{{\hat\sigma}^2} \colon \left\{ \begin{array}{llllll} x_0=0\\ x_1=0 \\ x_2=0 \\ x_4+x_5=0 \\ h^{-q-1}x_3+x_5=0 \\ -h^{-q^2-1}x_3+h^{-q^2-q}x_4=0. \end{array} \right. \]
Hence, $\dim_{\F_{q^6}} (\Gamma\cap\Gamma^{\hat\sigma})=1$ and
$\dim_{\F_{q^6}} (\Gamma\cap\Gamma^{\hat\sigma}\cap\Gamma^{{\hat\sigma}^2})=-1 $, since $q$ is odd and $h^{q^3+1}\neq 1$.
So, $\mathrm{intn}_{\sigma}(\Gamma)=3$ and hence, by Result \ref{nopseudonoLP} it follows that $\mathcal{L}_h$ is not equivalent neither to $L^1$ nor to $L^{2}_{\delta}$.

Generalizing \cite[Propositions 5.4 and 5.5]{ZZ} we have the following two propositions.

\begin{proposition}\label{L3}
The linear set $\mathcal{L}_h$ is not $\mathrm{P}\Gamma\mathrm{L}$-equivalent to $L^3_{\delta}$.
\end{proposition}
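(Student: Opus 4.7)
By Lemma \ref{equiv} it suffices to show that $\mathcal{U}_h$ is not $\Gamma\mathrm{L}$-equivalent to $U^3_\delta$ for any admissible $\delta$. Assuming such an equivalence exists, there would be $a,b,c,d\in\F_{q^6}$ with $ad-bc\neq 0$ and a field automorphism $\rho\in\mathrm{Aut}(\F_{q^6})$ such that the polynomial identity
\[
(ay+bf_h^\rho(y))^q+\delta(ay+bf_h^\rho(y))^{q^4}=cy+df_h^\rho(y)
\]
holds for all $y\in\F_{q^6}$, where $f_h^\rho$ denotes $f_h$ with its coefficients transformed by $\rho$ and $h':=h^\rho$ still satisfies $(h')^{q^3+1}=-1$. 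The plan is to expand both sides as $q$-polynomials in $y$ (reducing via $y^{q^6}=y$) and equate the coefficients of $y^{q^i}$ for $i=0,\dots,5$, which produces six algebraic equations in the unknowns $a,b,c,d,\delta$ over $\F_q[h']$.

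I would then split into three cases on the vanishing of $a$ and $b$. If $b=0$ then $d\neq 0$, yet the $y^{q^2}$ coefficient of the LHS is zero while that of the RHS is $-d(h')^{q^2-1}\neq 0$; contradiction. If $a=0$ then the $y^q$ equation forces $d=0$, after which the $y^{q^2}$ and $y^{q^3}$ equations combine to give $(h')^{q^3-q^2}=-1$; dividing this into $(h')^{q^3+1}=-1$ yields $(h')^{q^2+1}=1$, contradicting Lemma \ref{Lemma1}(2).

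The decisive case is $ab\neq 0$. Here the $y^{q^3}$ equation gives $\delta b^{q^4}=b^q(h')^{q^3-q}$; substituting into both the $y^{q^2}$ and the $y^{q^5}$ equations and using $(h')^{q^3}=-(h')^{-1}$ to simplify, one finds that both $(h')^{q^3-q^2}$ and $(h')^{q^5-q^4+q^3-q}$ reduce to the same quantity $-(h')^{-(q^2+1)}$. Consequently the two equations collapse to the common factor $b^q\bigl[1-(h')^{-(q^2+1)}\bigr]$, and comparing them yields $d\bigl(1+(h')^{q-1}\bigr)=0$. Since Lemma \ref{Lemma1}(1) excludes $(h')^q=-h'$, we obtain $d=0$; but then the $y^q$ equation $a^q=d(h')^{q-1}$ forces $a=0$, against the case assumption. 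The main obstacle is precisely this exponent bookkeeping in $\langle h'\rangle$: the nontrivial coincidence $(h')^{q^3-q^2}=(h')^{q^5-q^4+q^3-q}=-(h')^{-(q^2+1)}$ is what collapses two a priori independent equations into the single constraint that generates the final contradiction.
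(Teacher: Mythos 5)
Your argument is correct, but it takes a genuinely different route from the paper's. The paper only needs three of the six coefficient equations: from $dh^{q-1}=a^q$ and $d=\delta a^{q^4}$ it gets $\delta h^{q-1}=a^{q-q^4}$ whenever $a\neq 0$, hence $\N_{q^6/q^3}(\delta)=1$ (using $h^{(q-1)(q^3+1)}=(-1)^{q-1}=1$), contradicting the admissibility of $\delta$; then $a=d=0$ and the equation $d=b^q+\delta h^{q-q^2}b^{q^4}$ gives $\N_{q^6/q^3}(\delta)=1$ again. So the paper's contradiction comes entirely from the norm condition on $\delta$, in two lines. You never use that condition: instead you show the linear system has \emph{no} solution for \emph{any} $\delta$, deriving the contradictions from the arithmetic of $h$ (via Lemma \ref{Lemma1}). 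Your exponent bookkeeping checks out: the $y^{q^3}$ equation does give $\delta b^{q^4}=b^q(h')^{q^3-q}$, both $(h')^{q^3-q^2}$ and $(h')^{q^5-q^4+q^3-q}$ equal $-(h')^{-(q^2+1)}$, and the $y^{q^2}$ and $y^{q^5}$ equations then collapse to $-d(h')^{q-1}=d=b^q\bigl(1-(h')^{-(q^2+1)}\bigr)$, forcing $d=0$ and then $a=0$. Your approach is longer and requires the three-way case split, but it proves something marginally stronger (non-equivalence of the subspaces even for $\delta$ with $\N_{q^6/q^3}(\delta)=1$); the paper's is shorter because it exploits exactly the hypothesis on $\delta$ that you ignore.

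One small point you must patch: Lemma \ref{Lemma1} is stated under the hypothesis $h^4\neq 1$, which fails precisely when $h\in\F_q$ (there $h^2=-1$, so $h^4=1$), and Proposition \ref{L3} is asserted for those $h$ as well. Items (1) and (2) of the lemma do still hold in that case — if $h\in\F_q$ then $h^q=h\neq -h$ and $h^{q^2+1}=h^2=-1\neq 1$ — but as written your proof cites a lemma whose hypothesis is not satisfied in the subcase $h\in\F_q$, so you should add this one-line verification (or prove (1) and (2) without the assumption $h^4\neq1$, which is possible).
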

\begin{proof}
By Lemma \ref{equiv}, we have to check whether $\mathcal{U}_h$ and $U^3_{\delta}$ are $\Gamma\mathrm{L}$-equivalent, with $\N_{q^6/q^{3}}(\delta) \notin \{0,1\}$.
Suppose that there exist $\rho \in \mathrm{Aut}(\F_{q^6})$ and an invertible matrix
$\left( \begin{array}{llrr} a & b \\ c & d \end{array} \right)$ such that for each $x \in \F_{q^6}$ there exists $z \in \F_{q^6}$ satisfying  \[ \left(
\begin{array}{llrr}
a & b\\
c & d
\end{array} \right)
\left(
\begin{array}{ccc}
\hspace{1cm}x^\rho\\
h^{\rho(q-1)}x^{\rho q}-h^{\rho(q^2-1)}x^{\rho q^2}+x^{\rho q^4}+x^{\rho q^5}
\end{array} \right)
=\begin{pmatrix} z\\
{ z^q+\delta z^{q^4}} \end{pmatrix}.\]
Equivalently, for each $x \in \F_{q^6}$ we have\footnote{We may replace $h^\rho$ by $h$, since  $h^{q^3+1}=-1$ if and only if $(h^{\rho})^{q^3+1}=-1$.}
\[cx^\rho+d(h^{q-1}x^{\rho q}-h^{q^2-1}x^{\rho q^2}+x^{\rho q^4}+x^{\rho q^5})=a^qx^{\rho q}+\]
\[ +b^q(h^{q^2-q}x^{\rho q^2}+h^{-q-1}x^{\rho q^3}+x^{\rho q^5}+x^\rho)+\delta[a^{q^4}x^{\rho q^4}+b^{q^4}(h^{-q^2+q}x^{\rho q^5}-h^{q+1}x^\rho+x^{\rho q^2}+x^{\rho q^3})]. \]
This is a polynomial identity in $x^{\rho}$ and hence we have the following relations:
\begin{equation}\label{binZ}
\left\{
\begin{array}{llllll}
c=b^q+ \delta h^{q+1} b^{q^4}\\
dh^{q-1}= a^q\\
-dh^{q^2-1}=h^{q^2-q} b^q+\delta b^{q^4}\\
0=h^{-1-q} b^q+\delta b^{q^4}\\
d=\delta a^{q^4}\\
d= b^q+ \delta h^{q-q^2} b^{q^4}.
\end{array}\right.
\end{equation}

From the second and the fifth equations, if $a \neq 0$ then $\delta h^{q-1}=a^{q-q^4}$ and so $\N_{q^6/q^3}(\delta)=1$, which is not possible and so $a=d=0$ and $b,c \neq 0$. By the last equation, we would get $\N_{q^6/q^3}(\delta)=1$, a contradiction.
\end{proof}

\begin{proposition}\label{trin0}
The linear set $\mathcal{L}_h$ is $\mathrm{P}\Gamma\mathrm{L}$-equivalent to $L^4_{\delta}$
(with $\delta^2+\delta=1$) if and only if there exist $a,b,c,d \in \F_{q^6}$
and $\rho \in \mathrm{Aut}(\F_{q^6})$ such that $ad-bc \neq 0$ and either
\begin{equation}\label{trin}
\left\{
\begin{array}{llllll}
c=b^q-\delta k^{q^2+1} b^{q^5}\\
a=-k^{q+1}b^{q^4}-\delta^q b^{q^2}\\
d=k^{-q+1}b^{q^3}+\delta b^{q^5}\\
b^{q^3}+(k^{q-1}+\delta k^{q+q^2})b^{q^5}=0\\
k^{q^2-q}b^q +(1+k^{q^2-q})b^{q^3}+\delta k^{q^2-1}b^{q^5}=0\\
-\delta b^q+(k^{-q+1}+\delta^2 k^{1-q^2})b^{q^3}+\delta b^{q^5}=0
\end{array}\right.
\end{equation}
or
\begin{equation}\label{trin2}
\left\{
\begin{array}{llllll}
c=\delta b^q- k^{q^2+1} b^{q^5}\\
a=-\delta^q k^{q+1}b^{q^4}-b^{q^2}\\
d= k^{-q+1}b^{q^3}+ b^{q^5}\\
\delta b^{q^3}+(k^{q-1}-\delta k^{q^2+q})b^{q^5}=0\\
\delta k^{q^2-q}b^q+(k^{q^2-q}+1)b^{q^3}+k^{q^2-1}b^{q^5}=0\\
\delta^2b^q+(k^{-q+1}+\delta^2k^{-q^2+1})b^{q^3}+b^{q^5}=0,
\end{array}\right.
\end{equation}
where $k=h^\rho$.
\end{proposition}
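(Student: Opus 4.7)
The plan is to invoke Lemma \ref{equiv}, which reduces the P$\Gamma$L-equivalence $\mathcal{L}_h \sim L^4_\delta$ to the question of whether $\mathcal{U}_h$ is $\Gamma\mathrm{L}$-equivalent either to $U^4_\delta$ or to the adjoint $\F_q$-subspace $U_{\widehat{f^4_\delta}}$. Computing the adjoint directly from the bilinear form gives $\widehat{f^4_\delta}(x) = \delta^q x^q + x^{q^3} + x^{q^5}$; these two sub-cases of Lemma \ref{equiv} will produce, respectively, systems (\ref{trin}) and (\ref{trin2}).

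In each sub-case I translate the $\Gamma\mathrm{L}$-equivalence into the existence of $\rho \in \mathrm{Aut}(\F_{q^6})$ and an invertible matrix $\begin{pmatrix} a & b \\ c & d\end{pmatrix}$ over $\F_{q^6}$ such that, for every $x \in \F_{q^6}$,
\[\begin{pmatrix} a & b \\ c & d \end{pmatrix} \begin{pmatrix} x^\rho \\ f_h(x)^\rho \end{pmatrix} = \begin{pmatrix} z \\ g(z) \end{pmatrix},\]
with $g = f^4_\delta$ in Case 1 and $g = \widehat{f^4_\delta}$ in Case 2. Setting $y = x^\rho$ and $k = h^\rho$ (so that $k^{q^3+1}=-1$, as $\rho$ fixes $-1$), the top row defines $z$ as a $q$-polynomial in $y$, and substitution into the bottom row gives a polynomial identity in $y$ which must hold modulo $y^{q^6}-y$.

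The next step is to equate the six coefficients of $y^{q^i}$ for $i = 0,1,\ldots,5$ on both sides. The reductions rely throughout on $k^{q^3} = -k^{-1}$ (hence $k^{q^4} = -k^{-q}$, $k^{q^5} = -k^{-q^2}$), on $\delta^{q^{2j}} = \delta$ (since $\delta \in \F_{q^2}$), and on the quadratic identities $\delta^2 = 1-\delta$ and $1-\delta^2 = \delta$. Three of the six relations, namely those from the coefficients of $y$, $y^{q^3}$, and $y^{q^4}$, solve directly for $c$, $a$, and $d$ in terms of $b$ (the $y^{q^3}$ relation giving $a$ after raising to the $q^3$-power), producing the first three equations of (\ref{trin}), respectively (\ref{trin2}). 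Substituting these expressions back into the remaining three coefficient equations produces the three homogeneous conditions in $b$ and $k$ alone that complete each system.

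No single step is conceptually deep, so the main obstacle is purely organisational: tracking the Frobenius powers of $k$ and $\delta$ carefully, and keeping the two sub-cases of Lemma \ref{equiv} distinct. Case 2 is slightly more delicate because the adjoint places $\delta^q$ rather than $\delta$ in front of the $x^q$ term; this is precisely why the roles of $\delta$ and $\delta^q$ are essentially swapped in (\ref{trin2}) compared with (\ref{trin}), reflecting that $U^4_\delta$ and $U^4_{\delta^q}$ lie in the same $\Gamma\mathrm{L}$-orbit under the $q$-Frobenius of $\F_{q^6}^2$.
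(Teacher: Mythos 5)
Your proposal follows essentially the same route as the paper: reduce via Lemma \ref{equiv} to $\Gamma\mathrm{L}$-equivalence of $\mathcal{U}_h$ with $U^4_\delta$ or its adjoint, write out the semilinear identity, equate the coefficients of $x^{\rho q^i}$, and use the relations coming from $x^\rho$, $x^{\rho q^3}$, $x^{\rho q^4}$ to express $c$, $a$, $d$ in terms of $b$ (with $a$ obtained by raising to the $q^3$-power and using $k^{q^3}=-k^{-1}$), leaving the three homogeneous conditions. The only cosmetic difference is that you take the literal adjoint $\delta^q z^q+z^{q^3}+z^{q^5}$ while the paper writes $\delta z^q+z^{q^3}+z^{q^5}$ in the second sub-case, which merely interchanges $\delta$ and $\delta^q$ in \eqref{trin2}; as you observe, this is harmless because the extra Frobenius can be absorbed into $\rho$ and $\delta^q$ also satisfies $\delta^2+\delta=1$.
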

\begin{proof}
By Lemma \ref{equiv} we have to check whether $\mathcal{U}_h$ is equivalent either to $U^4_{\delta}$ or to $(U^4_{\delta})^\perp$.
\noindent Suppose that there exist $\rho \in \mathrm{Aut}(\F_{q^6})$ and an invertible matrix
$\left( \begin{array}{llrr} a & b \\ c & d \end{array} \right)$ such that for each $x \in \F_{q^6}$ there exists $z \in \F_{q^6}$ satisfying  \[ \left(
\begin{array}{llrr}
a & b\\
c & d
\end{array} \right)
\left(
\begin{array}{ccr}
x^\rho\\
h^{\rho(q-1)}x^{\rho q}-h^{\rho(q^2-1)}x^{\rho q^2}+x^{\rho q^4}+x^{\rho q^5}
\end{array} \right)
=\left(
\begin{array}{ccr}
z\\
z^q+z^{q^3}+\delta z^{q^5} \end{array}
\right).\]
Equivalently, for each $x \in \F_{q^6}$ we have
\[ cx^\rho+d(k^{q-1}x^{\rho q}-k^{q^2-1}x^{\rho q^2}+x^{\rho q^4}+x^{\rho q^5})=a^q x^{\rho q}+b^q(k^{q^2-q}x^{\rho q^2}+k^{-1-q}x^{\rho q^3}+x^{\rho q^5}+x^{\rho})+ \]
\[ +a^{q^3} x^{\rho q^3}+b^{q^3}(k^{-q+1}x^{\rho q^4}-k^{-q^2+1}x^{\rho q^5}+x^{\rho q}+x^{\rho q^2})+ \]
\[ +\delta[a^{q^5}x^{\rho q^5}+b^{q^5}(-k^{1+q^2}x^{\rho}+k^{q^2+q}x^{\rho q}+x^{\rho q^3}+x^{\rho q^4})]. \]
This is a polynomial identity in $x^{\rho}$ which yields to the following equations
\[
\left\{
\begin{array}{llllll}
c=b^q-\delta k^{q^2+1} b^{q^5}\\
dk^{q-1}=a^q+b^{q^3}+\delta k^{q+q^2} b^{q^5}\\
-d k^{q^2-1}=k^{q^2-q}b^q+b^{q^3}\\
0= k^{-q-1}b^q+a^{q^3}+\delta b^{q^5}\\
d= k^{-q+1}b^{q^3}+\delta b^{q^5}\\
d=b^q-k^{-q^2+1}b^{q^3}+\delta a^{q^5}
\end{array}\right.
\]
which can be written as \eqref{trin}.

Now, suppose that there exist $\rho \in \mathrm{Aut}(\F_{q^6})$ and an invertible matrix
$\left( \begin{array}{llrr} a & b \\ c & d \end{array} \right)$ such that for each $x \in \F_{q^6}$ there exists $z \in \F_{q^6}$ satisfying  \[ \left(
\begin{array}{llrr}
a & b\\
c & d
\end{array} \right)
\left(
\begin{array}{ccr}
\hspace{1cm}x^\rho\\
h^{\rho(q-1)}x^{\rho q}-h^{\rho(q^2-1)}x^{\rho q^2}+x^{\rho q^4}+x^{\rho q^5}
\end{array} \right)
=\left(
\begin{array}{ccr}
z\\
\delta z^q+z^{q^3}+ z^{q^5} \end{array}
\right).\]
Equivalently, for each $x \in \F_{q^6}$ we have
\[ cx^\rho+d(k^{q-1}x^{\rho q}-k^{q^2-1}x^{\rho q^2}+x^{\rho q^4}+x^{\rho q^5})=\delta[a^q x^{\rho q}+b^q(k^{q^2-q}x^{\rho q^2}+k^{-1-q}x^{\rho q^3}+x^{\rho q^5}+x^{\rho})]+ \]
\[ +a^{q^3} x^{\rho q^3}+b^{q^3}(k^{-q+1}x^{\rho q^4}-k^{-q^2+1}x^{\rho q^5}+x^{\rho q}+x^{\rho q^2})+ \]
\[ +a^{q^5}x^{\rho q^5}+b^{q^5}(-k^{1+q^2}x^{\rho}+k^{q^2+q}x^{\rho q}+x^{\rho q^3}+x^{\rho q^4}). \]
This is a polynomial identity in $x^{\rho}$ which yields to the following equations
\[
\left\{
\begin{array}{llllll}
c=\delta b^q- k^{q^2+1} b^{q^5}\\
dk^{q-1}=\delta a^q+b^{q^3}+ k^{q+q^2} b^{q^5}\\
-d k^{q^2-1}=\delta k^{q^2-q}b^q+b^{q^3}\\
0= \delta k^{-q-1}b^q+a^{q^3}+ b^{q^5}\\
d= k^{-q+1}b^{q^3}+ b^{q^5}\\
d=\delta b^q-k^{-q^2+1}b^{q^3}+ a^{q^5}
\end{array}\right.
\]
which can be written as \eqref{trin2}.
\end{proof}

We are now ready to prove that when $h\notin \F_{q^2}$, $\mathcal{L}_h$ is new.

\begin{proposition}\label{L4hnoFq2}
If $h\notin \F_{q^2}$, then $\mathcal{L}_h$ is not $\mathrm{P}\Gamma\mathrm{L}$-equivalent to $L^4_{\delta}$ (with $\delta^2+\delta=1$).
\end{proposition}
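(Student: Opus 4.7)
The plan is to argue by contradiction, relying on the explicit coordinate conditions derived in Proposition \ref{trin0}. Suppose $\mathcal{L}_h$ were $\mathrm{P}\Gamma\mathrm{L}$-equivalent to $L^4_\delta$. Then for some $\rho \in \mathrm{Aut}(\F_{q^6})$ and some $a,b,c,d \in \F_{q^6}$ with $ad-bc \neq 0$, one of the systems \eqref{trin}, \eqref{trin2} must hold, with $k := h^\rho$. Because $\rho$ is a power of the prime Frobenius, it commutes with $x \mapsto x^q$, so $k \notin \F_{q^2}$ (by hypothesis on $h$) and $k^{q^3+1}=-1$. I will treat \eqref{trin}; the case \eqref{trin2} is handled symmetrically, the only difference being that $\delta$ gets replaced by the other root of $T^2+T-1$.

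First I would rule out $b=0$: substituting $b=0$ into the first three equations of \eqref{trin} immediately gives $a=c=d=0$, contradicting $ad-bc\neq 0$. Hence $b \neq 0$, and so $b^q, b^{q^3}, b^{q^5}$ are all nonzero. The last three equations of \eqref{trin} then form a homogeneous $3 \times 3$ $\F_{q^6}$-linear system in $(b^q, b^{q^3}, b^{q^5})$ admitting a nontrivial solution. Vanishing of its coefficient determinant,
$$
\det \begin{pmatrix} 0 & 1 & k^{q-1}+\delta k^{q^2+q} \\ k^{q^2-q} & 1+k^{q^2-q} & \delta k^{q^2-1} \\ -\delta & k^{1-q}+\delta^2 k^{1-q^2} & \delta \end{pmatrix} = 0,
$$
yields a first polynomial relation $G(k,\delta)=0$ in $k$ (after using $\delta^2 = 1-\delta$).

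The determinant alone is unlikely to be enough, so I would add a second relation coming from Galois compatibility: from equation (4) of \eqref{trin} one has $b^{q^3} = -(k^{q-1}+\delta k^{q^2+q})\, b^{q^5}$, and raising both sides to the $q^2$-th power, using $k^{q^3}=-k^{-1}$, $k^{q^4}=-k^{-q}$, and $\delta^{q^2}=\delta$ (since $\delta$ satisfies a quadratic over $\F_p$) to reduce exponents, one obtains $b^q = \frac{k^{q^2+1}}{1-\delta k^{q^2-q}}\, b^{q^5}$. Comparing this with the expression for $b^q$ obtained by substituting $b^{q^3}$ back into equation (5) of \eqref{trin} gives an identity $F(k,\delta)=0$ purely in $k$.

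The remaining step is to factor $F(k,\delta)$ together with $G(k,\delta)$ and to argue that every surviving factor forces $k \in \F_{q^2}$. Heuristically one expects factors of the shape $k^{q^2+1}=\mu$, $k^{q+1}=\nu$, or $k^{q^2-q+1}=\lambda$ with $\mu,\nu,\lambda$ polynomial in $\delta$; each such condition, combined with $k^{q^3+1}=-1$, is quickly seen to imply $k^{q^2}=k$, i.e.\ $k\in\F_{q^2}$, a contradiction. Any truly degenerate factor, such as one forcing $h+h^q=0$ or $h^{q^2+1}=1$, is eliminated by Lemma \ref{Lemma1}. The main obstacle is precisely this factorization: in line with the Magma snippets elsewhere in the paper, the explicit polynomial manipulation seems best done symbolically, after which each case is dispatched in a line or two.
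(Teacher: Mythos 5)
Your setup matches the paper's: you invoke Proposition \ref{trin0}, correctly dispose of $b=0$, observe that $k=h^\rho\notin\F_{q^2}$, and aim to extract polynomial conditions on $k$ from the last three equations of \eqref{trin} that contradict $k\notin\F_{q^2}$. Your two specific relations are sound as necessary conditions: the $3\times3$ determinant must vanish because $(b^q,b^{q^3},b^{q^5})\neq(0,0,0)$, and your Frobenius-compatibility computation $b^q=\frac{k^{q^2+1}}{1-\delta k^{q^2-q}}\,b^{q^5}$ checks out (the denominator vanishing would force $b=0$, which you have excluded).

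However, there is a genuine gap exactly where the paper's proof does its real work. You stop at ``factor $F(k,\delta)$ together with $G(k,\delta)$ and argue that every surviving factor forces $k\in\F_{q^2}$,'' offering only a heuristic expectation of what the factors should look like. Nothing guarantees that your two handpicked relations $F,G$ have a common zero locus small enough for this to succeed: they are two equations in the three quantities $k,k^q,k^{q^2}$ (subject to $k^{q^3}=-k^{-1}$), so a priori their intersection can be positive-dimensional and need not split into factors of the clean shapes $k^{q^2+1}=\mu$, $k^{q+1}=\nu$, $k^{q^2-q+1}=\lambda$ that you predict. The paper instead forms the set $S$ of \emph{all} Frobenius conjugates of $\mathrm{Eq}_1,\mathrm{Eq}_2,\mathrm{Eq}_3$ in formal variables $X_i,Y_j,Z_\ell$, eliminates $X_5,X_4,X_3,X_2$ using conjugates of $\mathrm{Eq}_1$ to isolate a single polynomial $P$ in the remaining variables, and then eliminates $Z_1$ from $S$ using $P$, landing on the explicit product $bk^{q^2+1}(k-k^q)(k+k^q)(k^{q^2+1}-1)(k^{q^2+1}+1)=0$, each factor of which visibly forces $k\in\F_{q^2}$. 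Until you (or a computer algebra system) actually produce the analogous factorization from your $F$ and $G$ --- and carry out the parallel computation for \eqref{trin2}, which is a structurally different system rather than \eqref{trin} with $\delta$ replaced by the other root of $T^2+T-1$ --- the argument is a plan rather than a proof.
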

\begin{proof}
By Proposition \ref{trin0} we have to show that there are no $a,b,c$ and $d$ in $\F_{q^6}$ such that $ad-bc\neq 0$ and \eqref{trin} or \eqref{trin2} are satisfied.
Note that $b=0$ in \eqref{trin} and \eqref{trin2}  yields $a=c=d=0$,  a contradiction. So, suppose $b\neq0$.
Since $h \notin \F_{q^2}$ then $k \notin \F_{q^2}$.
We start by proving that the last three equations of \eqref{trin}, i.e.
\[ \left\{ \begin{array}{lll}
\mathrm{Eq}_1:b^{q^3}+(k^{q-1}+\delta k^{q+q^2})b^{q^5}=0\\
\mathrm{Eq}_2:k^{q^2-q}b^q +(1+k^{q^2-q})b^{q^3}+\delta k^{q^2-1}b^{q^5}=0\\
\mathrm{Eq}_3:-\delta b^q+(k^{-q+1}+\delta^2 k^{1-q^2})b^{q^3}+\delta b^{q^5}=0 \end{array} \right., \]
yield a contradiction.
As in the above section, we will consider the $q$-th powers of $\mathrm{Eq}_1$, $\mathrm{Eq}_2$ and $\mathrm{Eq}_3$ replacing $b^{q^i}$, $k^{q^j}$, and $\delta^{q^\ell}$ (respectively) by $X_i$, $Y_j$, and $Z_\ell$ with $i,j \in \{0,1,2,3,4,5\}$ and $\ell \in \{0,1\}$.
Consider the set $S$ of polynomials in the variables $X_i,Y_j$, and $Z_\ell$
\[S:=\{ \mathrm{Eq}_1^{q^\alpha}, \mathrm{Eq}_2^{q^\beta}, \mathrm{Eq}_3^{q^\gamma} \colon \alpha, \beta, \gamma \in \{0,1,2,3,4,5\} \}.\]
By eliminating from $S$ the variables $X_5$, $X_4$, $X_3$, and $X_2$ using $\mathrm{Eq}_1$, $\mathrm{Eq}_1^q$, $\mathrm{Eq}_1^{q^4}$, and $\mathrm{Eq}_1^{q^3}$ respectively  we obtain
\[ X_0Y_1(Z_1Y_0^2Y_2-Z_1Y_0Y_2^2-Z_1Y_0+Z_1Y_2-Z_0^2Z_2-Z_2)=0. \]
By the conditions on $b$ and $k$, $X_0Y_1\neq 0$ and therefore
\[P:=Z_1Y_0^2Y_2-Z_1Y_0Y_2^2-Z_1Y_0+Z_1Y_2-Z_0^2Z_2-Z_2=0.\]
We eliminate $Z_1$ in $S$ using $P$, obtaining, w.r.t. $b$, $k$, and $\delta$,
\[ bk^{q^2+1}(k-k^q)(k+k^q)(k^{q^2+1}-1)(k^{q^2+1}+1)=0,\]
 a contradiction to $k \notin\F_{q^2}$.

\smallskip

Consider now the last three equations of \eqref{trin2}, i.e.
\[ \left\{ \begin{array}{lll}
\mathrm{Eq}_1:\delta b^{q^3}+(k^{q-1}-\delta k^{q^2+q})b^{q^5}=0\\
\mathrm{Eq}_2:\delta k^{q^2-q}b^q+(k^{q^2-q}+1)b^{q^3}+k^{q^2-1}b^{q^5}=0\\
\mathrm{Eq}_3:\delta^2b^q+(k^{-q+1}+\delta^2k^{-q^2+1})b^{q^3}+b^{q^5}=0\\ \end{array} \right.. \]
As before, we will consider the $q$-th powers of $\mathrm{Eq}_1$, $\mathrm{Eq}_2$, and $\mathrm{Eq}_3$ replacing $b^{q^i}$, $k^{q^j}$, and $\delta^{q^\ell}$ (respectively) by $X_i$, $Y_j$, and $Z_\ell$ with $i,j \in \{0,1,2,3,4,5\}$ and $\ell \in \{0,1\}$.
Consider the set $S$ of polynomials in the variables $X_i,Y_j$ and $Z_\ell$
\[S:=\{ \mathrm{Eq}_1^{q^\alpha}, \mathrm{Eq}_2^{q^\beta}, \mathrm{Eq}_3^{q^\gamma} \colon \alpha, \beta, \gamma \in \{0,1,2,3,4,5\} \}.\]
We eliminate in $S$ the variables $X_5$, $X_4$, $X_3$,  and $X_2$ using  $\mathrm{Eq}_1$, $\mathrm{Eq}_1^q$, $\mathrm{Eq}_1^{q^4}$, and $\mathrm{Eq}_1^{q^3}$ respectively, and we get
\[ Y_0X_0(Z_1Y_0^2Y_2^2+2Z_1Y_0Y_1^2Y_2+2Z_1Y_0Y_2+Z_1Y_1^2-Y_0^2Y_2^2-Y_0Y_1^2Y_2-Y_0Y_2-Y_1^2)=0. \]
Since $b\neq 0$ and $k \notin\F_{q^2}$, $X_0Y_0\neq 0$ and therefore
\[P:=Z_1Y_0^2Y_2^2+2Z_1Y_0Y_1^2Y_2+2Z_1Y_0Y_2+Z_1Y_1^2-Y_0^2Y_2^2-Y_0Y_1^2Y_2-Y_0Y_2-Y_1^2=0.\]
Once again we  consider the resultants of the polynomials in $S$ and $P$ w.r.t. $Z_1$ and we obtain
\[bk^{q^2+2q}(k-k^q)(k+k^q)(k^{q^2+1}-1)(k^{q^2+1}+1)=0,\]
a contradiction to $k \notin\F_{q^2}$.
\end{proof}

As a consequence of the above considerations and Propositions \ref{L3} and \ref{L4hnoFq2}, we have the following.

\begin{corollary}\label{nonequiv}
If $h\notin \F_{q^2}$, then $\mathcal{L}_h$ is not $\mathrm{P}\Gamma\mathrm{L}$-equivalent to any known scattered linear set in $\PG(1,q^6)$.
\end{corollary}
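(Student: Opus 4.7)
The plan is to verify that $\mathcal{L}_h$ is inequivalent to each of the four families of maximum scattered linear sets listed in Example \ref{exKnownscattered} by assembling results already established in this subsection. Since these four families exhaust the currently known examples, the corollary follows at once.

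For the first two families, I would exploit the intersection number criterion. The projecting subspace $\Gamma$ yielding $\mathcal{L}_h$ is written down explicitly just before the statement; its images $\Gamma^{\hat\sigma}$ and $\Gamma^{\hat\sigma^2}$ are obtained by cyclically shifting the coordinate indices $x_i \mapsto x_{i+1}$ and raising the coefficients to the $q$-th power. A direct linear-algebra computation gives
\[
\dim_{\F_{q^6}}(\Gamma \cap \Gamma^{\hat\sigma}) = 1, \qquad \dim_{\F_{q^6}}(\Gamma \cap \Gamma^{\hat\sigma} \cap \Gamma^{\hat\sigma^2}) = -1,
\]
where the second equality relies on $q$ being odd together with $h^{q^3+1}=-1\neq 1$. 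Thus $\intn_{\hat\sigma}(\Gamma)=3$, and Result \ref{nopseudonoLP} immediately rules out equivalence of $\mathcal{L}_h$ with either the pseudoregulus-type linear set $L^1$ or any LP-type linear set $L^2_\delta$.

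For the remaining two families, the work has already been done: Proposition \ref{L3} shows directly that $\mathcal{L}_h$ is not $\mathrm{P}\Gamma\mathrm{L}$-equivalent to $L^3_\delta$ for any admissible $\delta$, and Proposition \ref{L4hnoFq2}, whose hypothesis is exactly $h \notin \F_{q^2}$, gives the inequivalence of $\mathcal{L}_h$ and $L^4_\delta$ under the condition assumed in the corollary. Assembling the four exclusions completes the proof.

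The only real obstacle is absorbed in Proposition \ref{L4hnoFq2}, whose proof hinges on a delicate resultant/elimination argument in the variables $X_i,Y_j,Z_\ell$ and genuinely requires the assumption $h \notin \F_{q^2}$: this is precisely why the exceptional case $h \in \F_q$ (which falls inside $\F_{q^2}$) must be excluded from the corollary's hypothesis. Everything else is bookkeeping.
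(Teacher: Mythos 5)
Your proof is correct and follows exactly the paper's route: the intersection-number computation with Result \ref{nopseudonoLP} excludes $L^1$ and $L^2_\delta$, and Propositions \ref{L3} and \ref{L4hnoFq2} exclude $L^3_\delta$ and $L^4_\delta$ respectively. Nothing further is needed.
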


\subsection{$\mathcal{L}_h$ may be defined by a trinomial}

Suppose that $h \in \F_{q^2}$, then the condition on $h$ becomes $h^{q+1}=-1$.
For such $h$ we can prove that the linear set $\mathcal{L}_h$ can be defined by the $q$-polynomial $(h^{-1}-1)x^q+x^{q^3}+(h-1)x^{q^5}$.

\begin{proposition}\label{ltri}
If $h\in \F_{q^2}$, then the linear set $\mathcal{L}_h$ is $\mathrm{P}\Gamma\mathrm{L}$-equivalent to
\[ L_{\mathrm{tri}}:=\{ \la (x,(h^{-1}-1)x^q+x^{q^3}+(h-1)x^{q^5}) \ra_{\F_{q^6}} \colon x \in \F_{q^6}^* \}. \]
\end{proposition}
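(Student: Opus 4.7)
The goal is a $\mathrm{P}\Gamma\mathrm{L}$-equivalence of linear sets, which I will deduce from a $\mathrm{GL}(2,q^6)$-equivalence of the underlying $\F_q$-subspaces. Concretely, I will look for constants $a,b,c,d \in \F_{q^6}$ with $ad-bc \neq 0$ such that the matrix $\left(\begin{smallmatrix} a & b \\ c & d \end{smallmatrix}\right)$ carries $\mathcal{U}_h = \{(x,f_h(x))\colon x\in\F_{q^6}\}$ onto the subspace $U_g = \{(x,g(x))\colon x\in\F_{q^6}\}$ associated to $g(x)=(h^{-1}-1)x^q+x^{q^3}+(h-1)x^{q^5}$. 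This amounts to the $q$-polynomial identity
\[ g(ax+bf_h(x)) = cx + df_h(x) \qquad \text{for all } x \in \F_{q^6}, \]
since if $x\mapsto ax+bf_h(x)$ is a bijection of $\F_{q^6}$ (which follows from invertibility of the matrix), this identity reparameterizes $\mathcal{U}_h$ as $U_g$.

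The first step is to simplify $f_h$ using the hypothesis: combining $h \in \F_{q^2}$ with $h^{q^3+1}=-1$ gives $h^{q+1}=-1$, hence $h^q=-h^{-1}$ and $h^{q^2-1}=1$. Thus $f_h(x) = -h^{-2}x^q - x^{q^2} + x^{q^4} + x^{q^5}$, and each power $f_h(x)^{q^k}$ becomes an explicit four-term $q$-polynomial in $x$ with coefficients in $\F_{q^2}$.

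The next step is to expand both sides of the target identity and match the six coefficients of $x^{q^i}$, $i=0,\dots,5$, producing a system of equations in $a,b,c,d$. The natural ansatz $b=1$ renders the system consistent: the rows for $[x^{q^2}]$ and $[x^{q^4}]$ both yield $d=-(h^2-h+1)$; the $[x^{q^3}]$ row forces $a=h^{-1}-h$, which pleasantly lies in $\F_q$ (confirming the required Frobenius-invariance of $a$); and the $[x]$ row then gives $c = h^{-1}-1+h^2-h^3$. The two remaining equations, from $[x^q]$ and $[x^{q^5}]$, are verified by direct substitution using $h^q=-h^{-1}$.

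Finally, a short computation factors the determinant as
\[ ad - bc = 2h^{-1}(h-1)(h+1)(h^2-h+1). \]
Each factor is nonzero under our hypotheses: $h \neq 0$ and $h \neq \pm 1$ because $h^{q+1}=-1$, and $h^2-h+1=0$ would make $h$ a primitive sixth root of unity, forcing $q+1 \equiv 3 \pmod 6$, which is impossible for $q$ odd. Hence the matrix is invertible and the equivalence is established. The only real obstacle is guessing the ansatz $b=1$; once made, everything else is routine verification using the simplifications available from $h\in\F_{q^2}$.
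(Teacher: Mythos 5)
Your proof is correct and takes essentially the same approach as the paper: the matrix you derive, with $a=h^{-1}-h$, $b=1$, $c=h^{-1}-1+h^2-h^3$, $d=-(h^2-h+1)$, is exactly the matrix $A$ the paper exhibits, the paper merely asserting the verification as ``straightforward computations.'' Your coefficient-matching via $h^q=-h^{-1}$ and the determinant factorization $ad-bc=2h^{-1}(h-1)(h+1)(h^2-h+1)$ with its nonvanishing check supply precisely the details the paper omits.
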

\begin{proof}
Let $A=\left( \begin{array}{llrr} a & b \\ c & d \end{array} \right)\in \mathrm{GL}(2,q^6)$ with $a=-h+h^{-1}, b=1, c=h^{-1}-1-h^3+h^2$ and $d=h-h^2-1$. Straightforward computations show that the subspaces $\mathcal{U}_h$ and $U_{(h^{-1}-1)x^q+x^{q^3}+(h-1)x^{q^5}}$ are $\Gamma\mathrm{L}(2,q^6)$-equivalent under the action of the matrix $A$.
Hence, the linear sets $\mathcal{L}_h$ and $L_{\mathrm{tri}}$ are  $\mathrm{P}\Gamma\mathrm{L}$-equivalent.
\end{proof}

The fact that $\mathcal{L}_h$ can also be defined by a trinomial will help us to completely close the equivalence issue for $\mathcal{L}_h$ when $h \in \F_{q^2}$.
Indeed, we can prove the following:

\begin{proposition}\label{eqL4dtrin}
If $h\in\F_{q^2}$, then the linear set $\mathcal L_h$ is  $\mathrm{P}\Gamma\mathrm{L}$-equivalent to
some $L_{\delta}^4$ ($\delta^2+\delta=1$) if and only if $h\in\Fq$ and $q$ is a power of $5$.
\end{proposition}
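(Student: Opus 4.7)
The plan is to apply Proposition~\ref{trin0}, reducing the equivalence question to the solvability (for some $\rho\in\mathrm{Aut}(\F_{q^6})$) of either system \eqref{trin} or \eqref{trin2} with $ad-bc\ne 0$. Since $h\in\F_{q^2}$ and automorphisms of $\F_{q^6}$ preserve $\F_{q^2}$, the element $k=h^\rho$ lies in $\F_{q^2}$ and the hypothesis $h^{q^3+1}=-1$ becomes $k^{q+1}=-1$; equivalently $k^{q^2}=k$ and $k^q=-k^{-1}$. These two identities collapse every $k^{q^i}$ appearing in \eqref{trin}, \eqref{trin2} into a rational function in $k$.

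My first step would be to substitute these simplifications into the last three equations of \eqref{trin} and proceed as in the proof of Proposition~\ref{L4hnoFq2}: eliminate $b^q$ and $b^{q^3}$ in favour of $b^{q^5}$, then reduce powers of $\delta$ via $\delta^2+\delta=1$. I expect the resulting consistency condition to factor as
\[
(k^2+1)\bigl(k^2+\delta(k^4-3k^2+1)\bigr)=0.
\]
In the branch $k^2+1=0$, the identity $k^q=-k^{-1}=k$ forces $k\in\F_q$ and hence $h\in\F_q$ with $h^2=-1$. The system then reduces to $b^q=(2-3\delta)b^{q^5}$ and $b^{q^3}=(\delta-1)b^{q^5}$, which admit a common nonzero $b\in\F_{q^6}$ only if the norm criteria $(\delta-1)^3=(2-3\delta)^3=1$ together with the compatibility $(2-3\delta)^{q^2+1}=\delta-1$ all hold in $\F_{q^2}$. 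Each of these, expanded using $\delta^2+\delta=1$, becomes a linear equation in $\delta$ that, re-substituted into $\delta^2+\delta=1$, is equivalent to $5=0$ in $\F_q$; hence $\mathrm{char}\,\F_q=5$ and $\delta=2$.

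In the branch $k^2+1\ne 0$, applying the Frobenius $k^2\mapsto k^{-2}$ to $\delta(k^4-3k^2+1)+k^2=0$ yields $\delta^q=\delta$, so $\delta\in\F_q$. Combining the rewritten relation $k^2+k^{-2}=2-\delta$ with the norm condition $(k^{-2}+\delta)^3=1$ (arising from $b^{q^3-q^5}=k^{-2}+\delta$) and the non-vanishing of $ad-bc$ read off from the first three equations of \eqref{trin}, a sub-case analysis should rule out all admissible $(k,\delta)$ with $k^{q+1}=-1$ in this branch. System~\eqref{trin2} is disposed of by an entirely parallel argument. For the converse, when $q=5^s$ and $h\in\F_q$ with $h^2=-1$, take $\rho=\mathrm{id}$, $\delta=2\in\F_5$ (so that $\delta^2+\delta=6=1$), and $b=1$: the first three equations of \eqref{trin} give $a=4$, $c=d=3$; the last three are satisfied since $b^{q^i}=1$ and $\delta-1=2-3\delta=1$ in $\F_5$; and $ad-bc=9=4\ne 0$ certifies the equivalence.

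The principal obstacle will be producing the factorisation $(k^2+1)\bigl(k^2+\delta(k^4-3k^2+1)\bigr)=0$ cleanly and then completely ruling out the branch $k^2+1\ne 0$: several Frobenius conjugates of the three equations must be combined, and parasitic factors (such as $k^{q^2+1}\pm 1$ or $k\pm k^q$) arising in intermediate resultants must be discarded using Lemma~\ref{Lemma1} or the standing hypothesis $k^{q+1}=-1$. Computer algebra, as used throughout the paper, will be essential for making the elimination tractable.
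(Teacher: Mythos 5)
Your converse is correct and even more explicit than the paper's (which simply cites \cite{ZZ} for it): with $q=5^s$, $k=h\in\F_q$, $h^2=-1$, $\delta=2$ and $b=1$, the last three equations of \eqref{trin} evaluate to $0$, $5$ and $5$, and $ad-bc=9\neq0$, so Proposition~\ref{trin0} applies. Your treatment of the branch $k^2+1=0$ is also sound: there $k^q=-k^{-1}=k$ forces $k\in\F_q$, the sixth equation of \eqref{trin} becomes a consequence of the fourth and fifth modulo $\delta^2+\delta=1$, and the resulting norm condition $\N_{q^6/q^2}(\delta-1)=1$ does collapse to $5=0$.

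The genuine gap is the forward direction for $h\in\F_{q^2}\setminus\F_q$. The factorisation $(k^2+1)\bigl(k^2+\delta(k^4-3k^2+1)\bigr)=0$ is announced as an expectation, not derived, and the branch $k^2+1\neq0$ is dismissed with ``a sub-case analysis should rule out all admissible $(k,\delta)$''; system \eqref{trin2} is likewise deferred to ``an entirely parallel argument''. This is exactly the hard part: the elimination carried out in Proposition~\ref{L4hnoFq2} terminates in the factor $(k-k^q)(k+k^q)(k^{q^2+1}-1)(k^{q^2+1}+1)$, which yields nothing once $k\in\F_{q^2}$ (with $k^{q+1}=-1$ it vanishes only when $k^4=1$), so that computation cannot simply be rerun; the resultants must be redone from scratch and the first three equations together with $ad-bc\neq0$ must also be brought in, none of which is done. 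The paper sidesteps all of this via Proposition~\ref{ltri}: for $h\in\F_{q^2}$ it replaces $f_h$ by the trinomial $(h^{-1}-1)x^q+x^{q^3}+(h-1)x^{q^5}$, and the analogue of \eqref{trin} for that trinomial (system \eqref{trinomio}) is so much simpler that subtracting the fifth equation from the $q^2$-th power of the third gives the single monomial relation $b^q=b^{q^5}(k^q-1)$; everything then reduces to the norm conditions $\N_{q^6/q^2}(k^q-1)=1$ and $\N_{q^6/q^2}(k-1)=1$, which combine with $k^{q+1}=-1$ to give $9k^2-3k+5=0$ and a short case split on whether $k\in\F_q$. To complete your argument you must either actually perform and report the full elimination for \eqref{trin} and \eqref{trin2} with $k\in\F_{q^2}\setminus\F_q$, or, better, first pass to the trinomial model as in Proposition~\ref{ltri}.
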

\begin{proof}
Recall that by \cite[Proposition 5.5]{ZZ} if $h \in \F_q$ and $q$ is a power of $5$, then  $\mathcal L_h$ is $\mathrm{P}\Gamma\mathrm{L}$-equivalent to some $L_{\delta}^4$.
As in the proof of Proposition \ref{trin0}, by Lemma \ref{equiv} we have to check whether $U_{(h^{-1}-1)x^q+x^{q^3}+(h-1)x^{q^5}}$ is
$\Gamma\mathrm{L}$-equivalent either to $U^4_{\delta}$ or to $(U^4_{\delta})^\perp$.
\noindent Suppose that there exist $\rho \in \mathrm{Aut}(\F_{q^6})$ and an invertible matrix
$\left( \begin{array}{llrr} a & b \\ c & d \end{array} \right)$ such that for each $x \in \F_{q^6}$ there exists $z \in \F_{q^6}$ satisfying  \[ \left(
\begin{array}{llrr}
a & b\\
c & d
\end{array} \right)
\left(
\begin{array}{cc}
x^\rho\\
(h^{-\rho}-1)x^{\rho q}+x^{\rho q^3}+(h^\rho-1)x^{\rho q^5}
\end{array} \right)
=\left(
\begin{array}{cc}
z\\
z^q+z^{q^3}+\delta z^{q^5} \end{array}
\right).\]
Let $k=h^\rho$, for which $k^{q+1}=-1$.
As in Proposition \ref{L3}, we obtain a polynomial identity, whence
\begin{equation}\label{trinomio}
\left\{ \begin{array}{llllll}
c=b^q(k^q-1)+b^{q^3}+\delta b^{q^5}(k^{-q}-1)\\
d(k^{-1}-1)=a^q\\
0=b^q(k^{-q}-1)+b^{q^3}(k^q-1)+b^{q^5}\delta\\
d=a^{q^3}\\
0=b^q+b^{q^3}(k^{-q}-1)+b^{q^5}(k^q-1)\delta\\
d(k-1)=\delta a^{q^5}.
\end{array}
\right.
\end{equation}
By subtracting the fifth equation from the third equation raised to $q^2$, we get
\[ b^q=b^{q^5}(k^q-1), \]
i.e. either $b=0$ or $k^q-1=(b^q)^{q^4-1}$, whence
we get either $b=0$ or $\N_{q^6/q^2}(k^q-1)=1$.

If $b\neq 0$, since $k-1 \in \F_{q^2}$ and $\N_{q^6/q^2}(k-1)=(k-1)^3=1$, then
\[ k^3-3k^2+3k-2=0 \]
and, since $\N_{q^6/q^2}(k^q-1)=1$ and $k^q=-1/k$,
\[ 2k^3+3k^2+3k+1=0, \]
from which we get
\begin{equation}\label{eq2grado}
9k^2-3k+5=0.
\end{equation}
\begin{itemize}
\item If $k \notin \F_q$ then $k$ and $k^q$ are the solutions of \eqref{eq2grado} and
\[-1=k^{q+1}=\frac{5}9,\]
which holds if and only if $q$ is a power of $7$. By \eqref{eq2grado} it follows that $k \in \F_q$, a contradiction.
\item If $k \in \F_q$, then $k^2=-1$ and by \eqref{eq2grado} we have $k=-4/3$, which is possible if and only if $q$ is a power of $5$.
\end{itemize}
Hence, if either $k\notin \F_q$ or $k\in \F_q$ with $q$ not a power of $5$, we have that $b=0$ and hence $c=0$, $a\neq0$ and $d\neq0$.

By combining the second and the fourth equation of \eqref{trinomio}, we get $\N_{q^6/q^2}(k^{-1}-1)=1$ and, since $k^q=-1/k$, $\N_{q^6/q^2}(k^{q}+1)=-1$.
Arguing as above, we get a contradiction whenever $k\notin\F_q$ or $k\in\F_q$ with $q$ not a power of $5$.

Now, suppose that there exist $\rho \in \mathrm{Aut}(\F_{q^6})$ and an invertible matrix
$\left( \begin{array}{llrr} a & b \\ c & d \end{array} \right)$ such that for each $x \in \F_{q^6}$ there exists $z \in \F_{q^6}$ satisfying  \[ \left(
\begin{array}{llrr}
a & b\\
c & d
\end{array} \right)
\left(
\begin{array}{cc}
x^\rho\\
(h^{-\rho}-1)x^{\rho q}+x^{\rho q^3}+(h^\rho-1)x^{\rho q^5}
\end{array} \right)
=\left(
\begin{array}{cc}
z\\
\delta z^q+z^{q^3}+ z^{q^5} \end{array}
\right).\]
Let $k=h^\rho$.
As before, we get the following equations
\begin{equation}\label{trinomio2}
\left\{ \begin{array}{llllll}
c=\delta b^q(k^q-1)+b^{q^3}+b^{q^5}(k^{-q}-1)\\
d(k^{-1}-1)=\delta a^q\\
0=\delta b^q(k^{-q}-1)+b^{q^3}(k^q-1)+b^{q^5}\\
d=a^{q^3}\\
0=\delta b^q+b^{q^3}(k^{-q}-1)+b^{q^5}(k^q-1)\\
d(k-1)= a^{q^5}.
\end{array}
\right.
\end{equation}
By subtracting the fifth equation from the third raised to $q^2$ of the above system we get
\[ b^q=b^{q^3}(k^{-q}-1). \]
If $b\neq0$, then $\N_{q^6/q^2}(k^{-q}-1)=1$.
Hence, arguing as above, we get that $b=0$ and hence $c=0$, $a,d \neq 0$.
By combining the  fourth equation with the second and the fifth equation of \eqref{trinomio2} we get $\N_{q^6/q^2}(k-1)=1$, which yields again to a contradiction when $k \notin \F_q$ or $k \in \F_q$ with $q$ not a power of $5$.
\end{proof}

So, as a consequence of Corollary \ref{nonequiv} and of the above proposition, we have the following result.

\begin{corollary}\label{EquivLS}
Apart from the case $h \in \F_q$ and $q$ a power of $5$, the linear set $\mathcal{L}_h$ is not $\mathrm{P}\Gamma\mathrm{L}$-equivalent to any known scattered linear set in $\PG(1,q^6)$.
\end{corollary}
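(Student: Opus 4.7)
The plan is to combine the case analyses already done in Corollary \ref{nonequiv} and Proposition \ref{eqL4dtrin}, together with the intersection-number computation, to rule out equivalence with each of the four known families of maximum scattered linear sets of $\PG(1,q^6)$ listed in Example \ref{exKnownscattered}. Since the list $L^1$, $L^2_{\delta}$, $L^3_{\delta}$, $L^4_{\delta}$ exhausts the currently known classes up to $\mathrm{P}\Gamma\mathrm{L}$-equivalence, proving the corollary reduces to verifying non-equivalence with each of them, under the standing hypothesis that $h^{q^3+1}=-1$ and $q$ is odd.

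First I would dispose of $L^1$ and $L^2_{\delta}$ by invoking the intersection-number calculation performed just before Proposition \ref{L3}: the explicit computation of $\Gamma$, $\Gamma^{\hat\sigma}$, $\Gamma^{\hat\sigma^2}$ gives $\dim(\Gamma\cap\Gamma^{\hat\sigma})=1$ and $\dim(\Gamma\cap\Gamma^{\hat\sigma}\cap\Gamma^{\hat\sigma^2})=-1$ (here we use crucially that $q$ is odd and $h^{q^3+1}=-1\neq 1$), so $\mathrm{intn}_{\sigma}(\Gamma)=3$, and Result \ref{nopseudonoLP} then excludes equivalence with both the pseudoregulus type and the LP-type linear sets. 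Next, Proposition \ref{L3} directly excludes equivalence with $L^3_{\delta}$ for every admissible $h$.

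It remains to handle $L^4_{\delta}$, and here I would split on whether $h\in\F_{q^2}$. If $h\notin\F_{q^2}$, Proposition \ref{L4hnoFq2} already excludes equivalence with $L^4_{\delta}$, so combining the four exclusions gives Corollary \ref{nonequiv}, which is exactly the statement we want in this subcase. If instead $h\in\F_{q^2}$, the condition $h^{q^3+1}=-1$ forces $h^{q+1}=-1$, and by Proposition \ref{ltri} the linear set $\mathcal L_h$ admits the trinomial representation $L_{\mathrm{tri}}$; then Proposition \ref{eqL4dtrin} shows that $\mathcal L_h$ is equivalent to some $L^4_{\delta}$ if and only if $h\in\Fq$ and $q$ is a power of $5$.

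Putting these two subcases together yields the corollary: the only way $\mathcal L_h$ can be $\mathrm{P}\Gamma\mathrm{L}$-equivalent to a known scattered linear set is through the $L^4_{\delta}$ family in the excluded regime $h\in\F_q$, $q$ a power of $5$. The proof is therefore a one-paragraph assembly argument with no new obstacle, since all the hard work (the intersection-number computation and the two polynomial-identity analyses in Propositions \ref{L3}, \ref{L4hnoFq2}, \ref{eqL4dtrin}) has already been carried out upstream; the only subtlety to state carefully is that the four families listed in Example \ref{exKnownscattered} genuinely comprise the full list of currently known maximum scattered linear sets of $\PG(1,q^6)$, so that the word ``known'' in the statement is justified.
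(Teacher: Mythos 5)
Your proposal is correct and matches the paper's argument, which simply cites Corollary \ref{nonequiv} and Proposition \ref{eqL4dtrin}; you merely make explicit the assembly that the paper leaves implicit (in particular, that the intersection-number exclusion of $L^1$, $L^2_{\delta}$ and Proposition \ref{L3}'s exclusion of $L^3_{\delta}$ apply to all admissible $h$, including $h\in\F_{q^2}$). No gap.
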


By Proposition \ref{ltri}, when $h \in \F_{q^2}$, $\mathcal{L}_h$ is a linear set of the family presented in \cite[Section 7]{PZ}.
Also, we get an extension of \cite[Table 1]{MMZ}, where it is shown examples of scattered linear sets which could generalize the family presented in \cite{CsMZ2018}.
We do not know whether the linear set $\mathcal{L}_h$, for each $h \in\mathbb{F}_{q^6}\setminus\mathbb{F}_{q^2}$ with $h^{q^3+1}=-1$, may be defined by a trinomial or not.

\section{New MRD-codes}\label{Sec:MRD}

Delsarte in \cite{Delsarte} (see also \cite{Gabidulin}) introduced in 1978 rank metric codes as follows.
A  \emph{rank metric code} (or \emph{RM}-code for short) $\mathcal C$ is a subset of the
set of $m \times n$ matrices $\F_q^{m\times n}$ over $\F_q$ equipped with the distance function
\[d(A,B) = \mathrm{rk}\,(A-B)\]
for $A,B \in \F_q^{m\times n}$.
The \emph{minimum distance} of $\C$ is
\[d = \min\{ d(A,B) \colon A,B \in \C,\,\, A\neq B \}.\]
We will say that a rank metric code of $\F_q^{m\times n}$ with minimum distance $d$ has parameters $(m,n,q;d)$.
When $\C$ is an $\F_q$-subspace of $\F_q^{m\times n}$, we say that $\C$ is $\F_q$-linear.
In the same paper, Delsarte also showed that the parameters of these codes fulfill a Singleton-like bound, i.e.
\[ |\C| \leq q^{\max\{m,n\}(\min\{m,n\}-d+1)}. \]
When the equality holds, we call $\C$ a \emph{maximum rank distance} (\emph{MRD} for short) code.
We will consider only the case $m=n$ and we will use the following equivalence definition for codes of $\F_q^{m \times m}$.
Two $\F_q$-linear RM-codes $\C$ and $\C'$ are
\emph{equivalent} if and only if there exist two invertible matrices $A,B \in \F_q^{m \times m}$ and a field automorphism $\sigma$ such that $\{A C^\sigma B \colon C\in \C\}=\C'$, or $\{A C^{T\sigma}B \colon C\in \C\}=\C'$, where $T$ denotes transposition.
Also, the \emph{left} and \emph{right idealisers} of $\C$ are $L(\C)=\{A \in \mathrm{GL}(m,q) \colon A \C\subseteq \C\}$ and $R(\C)=\{B \in \mathrm{GL}(m,q) \colon \C B \subseteq \C\}$, \cite{LN2016,LTZ2}. They are important invariants for linear rank metric codes, see also \cite{GiuZ} for further invariants.

\medskip

In \cite[Section 5]{Sh} Sheekey showed that scattered $\F_q$-linear sets of $\PG(1,q^n)$ of rank $n$ yield $\F_q$-linear MRD-codes with parameters $(n,n,q;n-1)$ with left idealiser isomorphic to $\F_{q^n}$; see \cite{CSMPZ2016,CsMPZ2019,ShVdV} for further details on such kind of connections.
We briefly recall here the construction from \cite{Sh}. Let $U_f=\{(x,f(x))\colon x\in \F_{q^n}\}$
for some scattered $q$-polynomial $f(x)$.
After fixing an $\F_q$-basis for $\F_{q^n}$ we can define an isomorphism between the rings $\mathrm{End}(\F_{q^n},\F_q)$ and $\F_q^{n\times n}$.
In this way the set
\[
\C_f:=\{x\mapsto af(x)+bx \colon a,b \in \F_{q^n}\}
\]
corresponds to a set of $n\times n$ matrices over $\F_q$ forming an $\F_q$-linear MRD-code with parameters $(n,n,q;n-1)$. Also, since $\C_f$ is an $\F_{q^n}$-subspace of $\mathrm{End}(\F_{q^n},\F_q)$
its left idealiser $L(\C_f)$ is isomorphic to $\F_{q^n}$.
For further details see \cite[Section 6]{CMPZ}.

Let $\C_f$ and $\C_h$ be two MRD-codes arising from maximum scattered subspaces $U_f$ and $U_h$ of $\F_{q^n}\times \F_{q^n}$.
In \cite[Theorem 8]{Sh} the author showed that there exist invertible matrices $A$, $B$ and $\sigma \in \mathrm{Aut}(\F_{q})$ such that $A \C_f^\sigma B=\C_h$ if and only if $U_f$ and $U_h$ are  $\Gamma\mathrm{L}(2,q^n)$-equivalent

Therefore, we have the following.

\begin{theorem}
\label{thm:newMRD}
The $\F_q$-linear MRD-code $\C_{f_h}$ arising from the $\F_q$-subspace $\mathcal{U}_h$ has parameters $(6,6,q;5)$ and left idealiser isomorphic to $\F_{q^6}$,
and is not equivalent to any previously known MRD-code, apart from the case $h \in \F_q$ and $q$ a power of $5$.
\end{theorem}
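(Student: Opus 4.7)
The plan is to read off the parameters and left idealiser directly from Sheekey's construction, and then reduce the non-equivalence question to the content of Corollary \ref{EquivLS} through the standard dictionary between MRD-codes and linear sets.

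First, Theorems \ref{caso1} and \ref{Th:GeneralCase} together ensure that $f_h$ is a scattered $q$-polynomial over $\F_{q^6}$. Plugging $f_h$ into Sheekey's construction from \cite[Section 5]{Sh}, the code $\C_{f_h} = \{x\mapsto a f_h(x)+bx \colon a,b\in\F_{q^6}\}$ is an $\F_q$-linear MRD-code with parameters $(6,6,q;5)$. Since left multiplication by elements of $\F_{q^6}$ stabilises $\C_{f_h}$, its left idealiser $L(\C_{f_h})$ contains an isomorphic copy of $\F_{q^6}$; the dimension bound that forces $L(\C_{f_h}) \cong \F_{q^6}$ is the standard one recorded in \cite[Section 6]{CMPZ}.

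For the non-equivalence, I would argue contrapositively. Assume $\C_{f_h}$ is equivalent to some previously known $\F_q$-linear MRD-code $\mathcal D$. Because left idealisers are equivalence invariants (up to conjugation), $\mathcal D$ must have parameters $(6,6,q;5)$ and left idealiser isomorphic to $\F_{q^6}$; all such known codes arise via Sheekey's construction from a scattered $q$-polynomial $g$ whose associated linear set is $\mathrm{P}\Gamma\mathrm{L}$-equivalent to one of $L^1$, $L^2_\delta$, $L^3_\delta$, $L^4_\delta$ of Example \ref{exKnownscattered}. By \cite[Theorem 8]{Sh}, together with the classical observation that matrix transposition on $\C_g$ corresponds to replacing $g$ by its adjoint $\hat g$, equivalence of $\C_{f_h}$ with $\C_g$ is equivalent to $\mathcal U_h$ being $\Gamma\mathrm{L}(2,q^6)$-equivalent either to $U_g$ or to $U_{\hat g}$.

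Finally, Lemma \ref{equiv} translates such a $\Gamma\mathrm{L}$-equivalence into a $\mathrm{P}\Gamma\mathrm{L}$-equivalence between $\mathcal L_h$ and $L_g$, which is exactly the conclusion Corollary \ref{EquivLS} forbids, apart from the case $h\in\F_q$ with $q$ a power of $5$. The only mild subtlety I expect is keeping track of the transposition branch of MRD-equivalence — making sure it really does correspond to the passage from $\C_g$ to $\C_{\hat g}$ — but this is a routine verification through the Dickson matrix representation, and the rest of the argument is a straightforward chaining of already-established equivalences.
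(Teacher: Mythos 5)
Your proposal is correct and follows essentially the same route as the paper: read off the parameters and left idealiser from Sheekey's construction, invoke the classification from \cite[Section 6]{CMPZ} of previously known codes with these invariants, and reduce non-equivalence to Corollaries \ref{nonequiv} and \ref{EquivLS} via \cite[Theorem 8]{Sh} and Lemma \ref{equiv} (which already absorbs the adjoint/transposition branch you flag). The paper's proof is simply a terser version of the same chain of reductions.
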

\begin{proof}
From \cite[Section 6]{CMPZ}, the previously known $\F_q$-linear MRD-codes with parameters $(6,6,q;5)$ and with left idealiser isomorphic to $\F_{q^6}$ arise, up to equivalence, from one of the
maximum scattered subspaces of $\F_{q^{6}}\times\F_{q^{6}}$ described in Section \ref{EquivIssue}. From Corollaries \ref{nonequiv} and \ref{EquivLS} the result then follows.
\end{proof}

\end{document}